\documentclass[12pt]{amsart}
\usepackage{txfonts}      
\usepackage{amssymb}
\usepackage{eucal}
\usepackage{amsmath}
\usepackage{amscd}
\usepackage{xcolor}
\usepackage{multicol}
\usepackage[all]{xy}           
\usepackage{graphicx}
\usepackage{color}
\usepackage{colordvi}
\usepackage{xspace}
\usepackage{tikz}
\usepackage{makecell}
\usepackage{appendix}
\usepackage{amsthm}

\usepackage{ifpdf}
\ifpdf
\usepackage[colorlinks,final,backref=page,hyperindex]{hyperref}
\else
\usepackage[colorlinks,final,backref=page,hyperindex,hypertex]{hyperref}
\fi

\usepackage[active]{srcltx} 

\begin{document}


\newtheorem{thm}{Theorem}[section]
\newtheorem{lem}[thm]{Lemma}
\newtheorem{cor}[thm]{Corollary}
\newtheorem{pro}[thm]{Proposition}
\theoremstyle{definition}
\newtheorem{defi}[thm]{Definition}
\newtheorem{ex}[thm]{Example}
\newtheorem{rmk}[thm]{Remark}
\newtheorem{pdef}[thm]{Proposition-Definition}
\newtheorem{condition}[thm]{Condition}

\renewcommand{\labelenumi}{{\rm(\alph{enumi})}}
\renewcommand{\theenumi}{\alph{enumi}}

\newcommand {\emptycomment}[1]{} 

\newcommand{\nc}{\newcommand}
\newcommand{\delete}[1]{}

\nc{\tred}[1]{\textcolor{red}{#1}}
\nc{\tblue}[1]{\textcolor{blue}{#1}}
\nc{\tgreen}[1]{\textcolor{green}{#1}}
\nc{\tpurple}[1]{\textcolor{purple}{#1}}
\nc{\tgray}[1]{\textcolor{gray}{#1}}
\nc{\torg}[1]{\textcolor{orange}{#1}}
\nc{\tmag}[1]{\textcolor{magenta}}
\nc{\btred}[1]{\textcolor{red}{\bf #1}}
\nc{\btblue}[1]{\textcolor{blue}{\bf #1}}
\nc{\btgreen}[1]{\textcolor{green}{\bf #1}}
\nc{\btpurple}[1]{\textcolor{purple}{\bf #1}}

\nc{\revise}[1]{\textcolor{blue}{#1}}


\nc{\tforall}{\ \ \text{for all }}
\nc{\hatot}{\,\widehat{\otimes} \,}
\nc{\complete}{completed\xspace}
\nc{\wdhat}[1]{\widehat{#1}}

\nc{\ts}{\mathfrak{p}}
\nc{\mts}{c_{(i)}\ot d_{(j)}}

\nc{\NA}{{\bf NA}}
\nc{\LA}{{\bf Lie}}
\nc{\CLA}{{\bf CLA}}

\nc{\cybe}{CYBE\xspace}
\nc{\nybe}{NYBE\xspace}
\nc{\ccybe}{CCYBE\xspace}

\nc{\preliecom}{pre-Lie commutative\xspace}
\nc{\transpreliecom}{transposed pre-Lie commutative\xspace}
\nc{\transNovikovpoisson}{transposed Novikov-Poisson\xspace}
\nc{\transdiffnovikovpoisson}{transposed differential Novikov-poisson\xspace}
\nc{\diffnovikovpoisson}{differential Novikov-Poisson\xspace}
\nc{\preliepoisson}{pre-Lie Poisson\xspace}
\nc{\transpreliepoisson}{transposed pre-Lie poisson\xspace}

\nc{\calb}{\mathcal{B}}
\nc{\rk}{\mathrm{r}}
\newcommand{\g}{\mathfrak g}
\newcommand{\h}{\mathfrak h}
\newcommand{\pf}{\noindent{$Proof$.}\ }
\newcommand{\frkg}{\mathfrak g}
\newcommand{\frkh}{\mathfrak h}
\newcommand{\Id}{\rm{Id}}
\newcommand{\gl}{\mathfrak {gl}}
\newcommand{\ad}{\mathrm{ad}}
\newcommand{\add}{\frka\frkd}
\newcommand{\frka}{\mathfrak a}
\newcommand{\frkb}{\mathfrak b}
\newcommand{\frkc}{\mathfrak c}
\newcommand{\frkd}{\mathfrak d}
\newcommand {\comment}[1]{{\marginpar{*}\scriptsize\textbf{Comments:} #1}}

\nc{\vspa}{\vspace{-.1cm}}
\nc{\vspb}{\vspace{-.2cm}}
\nc{\vspc}{\vspace{-.3cm}}
\nc{\vspd}{\vspace{-.4cm}}
\nc{\vspe}{\vspace{-.5cm}}


\nc{\disp}[1]{\displaystyle{#1}}
\nc{\bin}[2]{ (_{\stackrel{\scs{#1}}{\scs{#2}}})}  
\nc{\binc}[2]{ \left (\!\! \begin{array}{c} \scs{#1}\\
    \scs{#2} \end{array}\!\! \right )}  
\nc{\bincc}[2]{  \left ( {\scs{#1} \atop
    \vspace{-.5cm}\scs{#2}} \right )}  
\nc{\ot}{\otimes}
\nc{\sot}{{\scriptstyle{\ot}}}
\nc{\otm}{\overline{\ot}}
\nc{\ola}[1]{\stackrel{#1}{\la}}

\nc{\scs}[1]{\scriptstyle{#1}} \nc{\mrm}[1]{{\rm #1}}

\nc{\dirlim}{\displaystyle{\lim_{\longrightarrow}}\,}
\nc{\invlim}{\displaystyle{\lim_{\longleftarrow}}\,}

\nc{\bfk}{{\bf k}} \nc{\bfone}{{\bf 1}}
\nc{\rpr}{\circ}
\nc{\dpr}{{\tiny\diamond}}
\nc{\rprpm}{{\rpr}}

\nc{\mmbox}[1]{\mbox{\ #1\ }} \nc{\ann}{\mrm{ann}}
\nc{\Aut}{\mrm{Aut}} \nc{\can}{\mrm{can}}
\nc{\twoalg}{{two-sided algebra}\xspace}
\nc{\colim}{\mrm{colim}}
\nc{\Cont}{\mrm{Cont}} \nc{\rchar}{\mrm{char}}
\nc{\cok}{\mrm{coker}} \nc{\dtf}{{R-{\rm tf}}} \nc{\dtor}{{R-{\rm
tor}}}
\renewcommand{\det}{\mrm{det}}
\nc{\depth}{{\mrm d}}
\nc{\End}{\mrm{End}} \nc{\Ext}{\mrm{Ext}}
\nc{\Fil}{\mrm{Fil}} \nc{\Frob}{\mrm{Frob}} \nc{\Gal}{\mrm{Gal}}
\nc{\GL}{\mrm{GL}} \nc{\Hom}{\mrm{Hom}} \nc{\hsr}{\mrm{H}}
\nc{\hpol}{\mrm{HP}}  \nc{\id}{\mrm{id}} \nc{\im}{\mrm{im}}

\nc{\incl}{\mrm{incl}} \nc{\length}{\mrm{length}}
\nc{\LR}{\mrm{LR}} \nc{\mchar}{\rm char} \nc{\NC}{\mrm{NC}}
\nc{\mpart}{\mrm{part}} \nc{\pl}{\mrm{PL}}
\nc{\ql}{{\QQ_\ell}} \nc{\qp}{{\QQ_p}}
\nc{\rank}{\mrm{rank}} \nc{\rba}{\rm{RBA }} \nc{\rbas}{\rm{RBAs }}
\nc{\rbpl}{\mrm{RBPL}}
\nc{\rbw}{\rm{RBW }} \nc{\rbws}{\rm{RBWs }} \nc{\rcot}{\mrm{cot}}
\nc{\rest}{\rm{controlled}\xspace}
\nc{\rdef}{\mrm{def}} \nc{\rdiv}{{\rm div}} \nc{\rtf}{{\rm tf}}
\nc{\rtor}{{\rm tor}} \nc{\res}{\mrm{res}} \nc{\SL}{\mrm{SL}}
\nc{\Spec}{\mrm{Spec}} \nc{\tor}{\mrm{tor}} \nc{\Tr}{\mrm{Tr}}
\nc{\mtr}{\mrm{sk}}

\nc{\ab}{\mathbf{Ab}} \nc{\Alg}{\mathbf{Alg}}

\nc{\BA}{{\mathbb A}} \nc{\CC}{{\mathbb C}} \nc{\DD}{{\mathbb D}}
\nc{\EE}{{\mathbb E}} \nc{\FF}{{\mathbb F}} \nc{\GG}{{\mathbb G}}
\nc{\HH}{{\mathbb H}} \nc{\LL}{{\mathbb L}} \nc{\NN}{{\mathbb N}}
\nc{\QQ}{{\mathbb Q}} \nc{\RR}{{\mathbb R}} \nc{\BS}{{\mathbb{S}}} \nc{\TT}{{\mathbb T}}
\nc{\VV}{{\mathbb V}} \nc{\ZZ}{{\mathbb Z}}


\nc{\calao}{{\mathcal A}} \nc{\cala}{{\mathcal A}}
\nc{\calc}{{\mathcal C}} \nc{\cald}{{\mathcal D}}
\nc{\cale}{{\mathcal E}} \nc{\calf}{{\mathcal F}}
\nc{\calfr}{{{\mathcal F}^{\,r}}} \nc{\calfo}{{\mathcal F}^0}
\nc{\calfro}{{\mathcal F}^{\,r,0}} \nc{\oF}{\overline{F}}
\nc{\calg}{{\mathcal G}} \nc{\calh}{{\mathcal H}}
\nc{\cali}{{\mathcal I}} \nc{\calj}{{\mathcal J}}
\nc{\call}{{\mathcal L}} \nc{\calm}{{\mathcal M}}
\nc{\caln}{{\mathcal N}} \nc{\calo}{{\mathcal O}}
\nc{\calp}{{\mathcal P}} \nc{\calq}{{\mathcal Q}} \nc{\calr}{{\mathcal R}}
\nc{\calt}{{\mathcal T}} \nc{\caltr}{{\mathcal T}^{\,r}}
\nc{\calu}{{\mathcal U}} \nc{\calv}{{\mathcal V}}
\nc{\calw}{{\mathcal W}} \nc{\calx}{{\mathcal X}}
\nc{\CA}{\mathcal{A}}

\nc{\fraka}{{\mathfrak a}} \nc{\frakB}{{\mathfrak B}}
\nc{\frakb}{{\mathfrak b}} \nc{\frakd}{{\mathfrak d}}
\nc{\oD}{\overline{D}}
\nc{\frakF}{{\mathfrak F}} \nc{\frakg}{{\mathfrak g}}
\nc{\frakm}{{\mathfrak m}} \nc{\frakM}{{\mathfrak M}}
\nc{\frakMo}{{\mathfrak M}^0} \nc{\frakp}{{\mathfrak p}}
\nc{\frakS}{{\mathfrak S}} \nc{\frakSo}{{\mathfrak S}^0}
\nc{\fraks}{{\mathfrak s}} \nc{\os}{\overline{\fraks}}
\nc{\frakT}{{\mathfrak T}}
\nc{\oT}{\overline{T}}
\nc{\frakX}{{\mathfrak X}} \nc{\frakXo}{{\mathfrak X}^0}
\nc{\frakx}{{\mathbf x}}
\nc{\frakTx}{\frakT}      
\nc{\frakTa}{\frakT^a}        
\nc{\frakTxo}{\frakTx^0}   
\nc{\caltao}{\calt^{a,0}}   
\nc{\ox}{\overline{\frakx}} \nc{\fraky}{{\mathfrak y}}
\nc{\frakz}{{\mathfrak z}} \nc{\oX}{\overline{X}}


\title[Solvability and nilpotency of transposed Novikov-Poisson algebras]{Solvability and nilpotency of transposed Novikov-Poisson algebras}

\author{Jiarou Jin}
\address{School of Mathematics, Hangzhou Normal University,
Hangzhou, 311121, China}
\email{jrjin@stu.hznu.edu.cn}

\author{Yanyong Hong (corresponding author)}
\address{School of Mathematics, Hangzhou Normal University,
Hangzhou, 311121, China}
\email{yyhong@hznu.edu.cn}

\subjclass[2010]{17B63, 17D25, 17A30, 17A36}

\keywords{transposed Novikov-Poisson algebra, nilpotency, solvability, It\^{o}'s theorem}

\begin{abstract}

In this paper, we develop the theory of nilpotency and solvability for transposed Novikov-Poisson algebras. We first establish several equivalent conditions for a dialgebra to be nilpotent, and show that the lower central series of a transposed Novikov-Poisson algebra $P$  admits a simplified form. We then prove that
$P$ is solvable if and only if it is right nilpotent, and also if and only if $P^2$ is nilpotent. Moreover, we show that nilpotency (respectively, solvability) of a transposed Novikov-Poisson algebra is equivalent to nilpotency (respectively, solvability) of both its underlying commutative associative algebra and its underlying Novikov algebra. Finally, we prove that It\^{o}'s  theorem holds for transposed Novikov-Poisson algebras.
\end{abstract}

\maketitle




\allowdisplaybreaks

\section{Introduction}
Novikov algebras first emerged in the study of Hamiltonian
operators in the formal variational calculus \cite{GD1, GD2} and Poisson brackets of hydrodynamic type \cite{BN}. As shown in \cite{X1}, they also correspond to a class of Lie conformal algebras that describe the singular part of operator product expansion of chiral fields in conformal field
theory \cite{K1}.
Recall that a {\bf (left) Novikov algebra} $(A,\circ)$ is a vector space $A$ equipped with a binary operation $\circ:A \otimes A \to A$ satisfying the following compatibility conditions:
\begin{eqnarray}
&\label{Novikov1}(x\circ y)\circ z=(x\circ z)\circ y,\\
&\label{Novikov2}(x\circ y)\circ z-x\circ (y\circ z)=(y\circ x)\circ z-y\circ (x\circ z)\;\;\;\text{for all $x$, $y$, $z\in A$.}
\end{eqnarray}
\delete{Let $B$ be a vector space with a binary operation $\diamond$. If $(B, \circ)$ is a Novikov algebra with $a\circ b:=b\diamond a$ for all $a$, $b\in B$, then $(B, \diamond)$ is called a {\bf right Novikov algebra}.} By Eq. (\ref{Novikov2}), Novikov algebras are also an important subclass of pre-Lie algebras
which have close relationships with many fields in mathematics and physics such as convex homogeneous cones, affine manifolds and affine structures on Lie groups, deformation of associative algebras, vertex algebras and so on.

The notion of a transposed Novikov-Poisson algebra was introduced in~\cite{transposed N-P}. Specifically, a triple $(P,\cdot,\circ)$ is called a {\bf transposed Novikov-poisson algebra} if $(P,\cdot)$ is a commutative associative algebra, $(P,\circ)$ is a Novikov algebra, and the following compatibility conditions hold for all $x$, $y$, $z\in P$:
\begin{eqnarray}
&\label{transNP1}\quad(x\cdot y)\circ z=(x\cdot z)\circ y,\\
&\label{transNP2}2z\cdot(x\circ y)=(z\cdot x)\circ y+x\circ(z\cdot y).
\end{eqnarray}
As demonstrated in~\cite{transposed N-P}, the affinization of a transposed Novikov-Poisson algebra yields a transposed Poisson algebra, a structure originally introduced in~\cite{Bai}. \delete{Note that the affinization of Novikov algebras was introduced in~\cite{BN}, and this construction yields many important infinite-dimensional Lie algebras, such as the Witt algebra, the centerless Heisenberg-Virasoro algebra, the centerless Schr\"odinger-Virasoro algebra and so on.} Moreover, transposed Poisson algebras  arise naturally from transposed Novikov-Poisson algebras by taking the commutator Lie algebra of the Novikov algebra. In addition, a natural transposed Poisson algebra structure exists on the tensor product of a transposed Novikov-Poisson algebra and a right differential Novikov-Poisson algebra. We note that differential Novikov-Poisson algebras, introduced in~\cite{BCZ}, form a subclass of the Novikov-Poisson algebras studied in~\cite{Xu1}. Beyond these constructions, transposed Novikov-Poisson algebras have many important properties, such as closure under tensor products and a close connection with $\frac{1}{2}$-derivations of the underlying Novikov algebras, the latter providing a method for constructing transposed Novikov-Poisson algebra structures on certain Novikov algebras.\par

A fundamental structural result established in \cite{transposed N-P} is that, over an algebraically closed field {\bf k} of characteristic zero or $p>2$,  every finite-dimensional non-trivial simple transposed Novikov-Poisson algebra is one-dimensional. This fact naturally directs attention to the study of nilpotent and solvable transposed Novikov-Poisson algebras. Simultaneously, the work~\cite{Poisson algebras-non-associative algebras} reformulated Poisson algebras in terms of non-associative algebras via polarization and depolarization, making it natural to involve both the associative and Lie products when defining nilpotency and solvability. Accordingly, the study of nilpotent and solvable Poisson algebras has progressed considerably in recent years~\cite{On the solvable Poisson algebras, Frattini of dialgebras}. Parallel developments have also emerged for other dialgebraic structures, including generalized Poisson algebras~\cite{generalized Poisson}, transposed Poisson algebras~\cite{niltransposed}, and Poisson $n$-Lie algebras~\cite{nilpoissonnlie}. In the specific context of Novikov algebras, substantial results are already available. E. Zelmanov proved that for a left-nilpotent finite-dimensional right Novikov algebra $N$, $N^2$ is nilpotent~\cite{Zelmanov Novikov}. I. Shestakov and Z. Zhang established the equivalence between solvability and right nilpotency in left Novikov algebras~\cite{Nilpotent Novikov}. D. Towers showed that every left nilpotent Novikov algebra is nilpotent~\cite{Frattini Novikov}. These developments naturally motivate our investigation into solvability and nilpotency of transposed Novikov-Poisson algebras.\par

It\^{o}'s theorem, originating in group theory  \cite{itogroup}, states that if  a group $G$ is the product $G=A\cdot B$ of two abelian subgroups $A$ and $B$, then $G$ is metabelian. This result has since been extended to a variety of algebraic settings, including Lie algebras~\cite{itolie1,itolie2}, Leibniz algebras~\cite{itolei}, Novikov algebras~\cite{Nilpotent Novikov}, associative algebras~\cite{itoass}, as well as some dialgebras, for example, non-associative Poisson algebras~\cite{itopoisson} and generalized Poisson algebras~\cite{itogenpoisson}.\par

 The paper is organized as follows. In Section 2, we recall the relevant definitions of nilpotency and solvability for dialgebras and establish several equivalent characterizations of nilpotent dialgebras. In particular, we show that the lower central series of a transposed Novikov-Poisson algebra admits a simplified form, which will be used in subsequent sections. In Section 3, we investigate the relationships among nilpotency, right nilpotency, and solvability for transposed Novikov-Poisson algebras. We prove that a transposed Novikov-Poisson algebra $(P,\cdot ,\circ)$ is solvable if and only if it is right nilpotent, and also if and only if $P^2$ is nilpotent. We further show that nilpotency (respectively, solvability) of a transposed Novikov-Poisson algebra is equivalent to nilpotency (respectively, solvability) of both its underlying commutative associative algebra and its underlying Novikov algebra.
 In Section 4, we provide equivalent conditions for metatriviality of a transposed Novikov-Poisson algebra, and show that It\^{o}'s theorem holds for transposed Novikov-Poisson algebras.

\noindent{\bf Notations.} Throughout this paper, let ${\bf k}$ be a field whose characteristic is not equal to $2$, and denote by $\mathbb{C}$ the set of complex numbers.

\vspace{-.2cm}
\section{Related Properties of Transposed Novikov-Poisson Algebras}
Recall that a {\bf dialgebra} $(P,\cdot, \circ)$ is a vector space $P$ with two binary operations $\cdot$ and $\circ$. Note that associative dialgebras were originally defined by Loday in the 1990s~\cite{assodialgebradefi}. A \textbf{subalgebra} of a dialgebra $(P,\cdot, \circ)$ is a linear subspace \(A\) satisfying
$A \cdot A + A\circ A \subseteq A$. A subalgebra \(I\) of $(P,\cdot, \circ)$ is an \textbf{ideal} if
$I \cdot P + P \cdot I + I\circ P + P\circ I \subseteq I$. Note that a transposed Novikov-Poisson algebra is a dialgebra. Furthermore, for each $x\in P$, define $P_x, P'_x, Q_x$ and $Q'_x\in\operatorname{End}(P)$ by $P_x(y)=x\cdot y$, $p'_x(y)=y\cdot x$,$Q_x(y)=x\circ y$ and $Q'_x(y)=y\circ x$ respectively for all $x,y\in P$.
Given a transposed Novikov-Poisson algebra $(P,\cdot,\circ )$. We use the notation $P_A$ and $P_N$ to denote the associative algebra structure $(P,\cdot)$ and the Novikov algebra structure $(P,\circ)$ respectively.

\begin{defi}~\cite{dialgebra nilsol}
Let $(P,\cdot,\circ)$ be a dialgebra and $A$ be a subalgebra of $P$. The \textbf{derived series} of $A$ is defined by
\begin{eqnarray*}
&&A^{(0)} := A, \qquad A^{(n+1)} := A^{(n)}\!\cdot\! A^{(n)} + A^{(n)}\circ A^{(n)}\; \quad\text{for all}\:n\geq 0.
\end{eqnarray*}
The dialgebra $(A,\cdot,\circ)$ is called \textbf{solvable} if there exists $m \ge 0$ such that $A^{(m)} = 0$.\par
The \textbf{lower central series} of $A$ is the sequence of $A$ given by
\begin{eqnarray*}
A^{1} := A, \qquad
A^{n} := \sum_{i=1}^{n-1} \bigl( A^{i}\!\cdot\! A^{n-i} + A^{i}\circ A^{n-i} \bigr) \quad\text{for all}\:n\geq 2.
\end{eqnarray*}
The dialgebra $(A,\cdot,\circ)$ is called \textbf{nilpotent} if there exists $m \ge 1$ such that $A^{m} = 0$.
\end{defi}

Next, we provide several equivalent conditions for a finite-dimensional dialgebra to be nilpotent, thereby extending the familiar characterizations of nilpotent algebras ~\cite{nilalgebra1}.

\begin{pro}\label{dialgebra nil4}
Let $(P, \cdot, \circ)$ be a finite-dimensional dialgebra. The following statements are equivalent:
\begin{enumerate}
\item \label{i}\(P\) is nilpotent;
\item \label{ii}If \(B \neq P\) is a subalgebra of \(P\), then there exists an element \(x \notin B\) such that \(x\cdot y,y\cdot x,x\circ y,y\circ x \in B\), for all \(y \in P\);
\item \label{iii} There is a basis \(\{x_1, \dots, x_n\}\) for \(P\) such that the matrices representing \(P_x, P'_x,Q_x\) and \(Q'_x\) with respect to this basis are strictly triangular, for all \(x \in P\);
\item \label{iv} There is a chain of ideals of \(P\) as follows:
 \begin{eqnarray*}
&&0 = P_{(0)} \subset P_{(1)} \subset \cdots \subset P_{(n)} = P,
 \end{eqnarray*}
 where \(\dim P_{(i)} = i\) and \(P\cdot  P_{(i)} +P_{(i)}\cdot P+ P_{(i)} \circ P+P\circ P_{(i)} \subseteq P_{(i-1)}\), for each $i=1,2,...,n$.
\end{enumerate}
\end{pro}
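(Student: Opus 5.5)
The plan is the cycle (a)$\Rightarrow$(b)$\Rightarrow$(d)$\Rightarrow$(c)$\Rightarrow$(a), following the classical proof for a single product. The only new point is that the lower central series now mixes both operations, and the key observation is that the $P^n$ form a decreasing chain of ideals. First, by induction on $n$, we check that $P^{n+1}\subseteq P^n$. Indeed, $P^2\subseteq P=P^1$, and if $P^{j+1}\subseteq P^j$ for all $j<n$, then every summand $P^i\cdot P^{n+1-i}$, $P^i\circ P^{n+1-i}$ of $P^{n+1}$ lies in a summand of $P^n$ by shrinking one factor. In addition, $P\cdot P^{n}+P^n\cdot P+P\circ P^n+P^n\circ P\subseteq P^{n+1}$, since these are the summands with $i=1$ or $i=n$.

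\emph{(a)$\Rightarrow$(b).} Let $B\neq P$ be a subalgebra and $P^m=0$. Since $P^1=P\not\subseteq B$ and $P^m=0\subseteq B$, there is a largest $k$ with $P^k\not\subseteq B$. Pick $x\in P^k\setminus B$. Then for every $y\in P$, each of $x\cdot y$, $y\cdot x$, $x\circ y$, $y\circ x$ lies in $P^{k+1}\subseteq B$, as required.

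\emph{(b)$\Rightarrow$(d).} Build the chain from the bottom, with $P_{(0)}=0$. Given $P_{(i-1)}$ with $\dim P_{(i-1)}=i-1<n$, it is in particular a subalgebra, because it is an ideal; for $i=1$ we use that $0$ is a subalgebra. Apply (b) with $B=P_{(i-1)}$ to obtain $x\notin P_{(i-1)}$ with $x\cdot P$, $P\cdot x$, $x\circ P$, $P\circ x$ all contained in $P_{(i-1)}$. Set $P_{(i)}=P_{(i-1)}+\mathbf{k}x$. By bilinearity, and since $P_{(i-1)}$ already multiplies $P$ into $P_{(i-2)}\subseteq P_{(i-1)}$, we get
\[
P\cdot P_{(i)}+P_{(i)}\cdot P+P_{(i)}\circ P+P\circ P_{(i)}\subseteq P_{(i-1)}\subseteq P_{(i)}.
\]
Hence $P_{(i)}$ is an ideal with the stated property and $\dim P_{(i)}=i$.

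\emph{(d)$\Rightarrow$(c).} Choose $x_i\in P_{(i)}\setminus P_{(i-1)}$. Then $\{x_1,\dots,x_n\}$ is a basis adapted to the flag. Each of $P_x,P'_x,Q_x,Q'_x$ sends $x_i$ into $P_{(i-1)}=\mathrm{span}\{x_1,\dots,x_{i-1}\}$, so all four matrices are strictly upper triangular.

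\emph{(c)$\Rightarrow$(a).} Let $V_j=\mathrm{span}\{x_1,\dots,x_j\}$. Strict triangularity of the left and right multiplications gives $P\ast V_j+V_j\ast P\subseteq V_{j-1}$ for $\ast\in\{\cdot,\circ\}$. This uses only the left operators on one side and only the right operators on the other; the strict triangularity of $P_x$ and $Q_x$ already gives both inclusions. We now prove $P^k\subseteq V_{n-k+1}$ by strong induction on $k$. A summand $P^i\ast P^{k-i}$ with $1\le i\le k-1$ lies in $P^i\ast V_{n-k+i+1}\subseteq P\ast V_{n-k+i+1}$, using $P^{k-i}\subseteq V_{n-k+i+1}$. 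Note this should be refined: the bound needed is $V_{n-k+1}$, not merely $V_{n-k+i}$.

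This refinement is the main obstacle, because applying $P$ once only lowers the index by one, while both factors are deep in the series. The fix is to use that the two factors are already in $P^i\subseteq V_{n-i+1}$ and $P^{k-i}\subseteq V_{n-k+i+1}$, and to prove the stronger statement $V_a\ast V_b\subseteq V_{\min(a,b)-?}$. That is awkward, so I would instead avoid the issue altogether. Set $Q^1=P$ and $Q^{k+1}=P\cdot Q^k+Q^k\cdot P+P\circ Q^k+Q^k\circ P$, and show $P^k\subseteq Q^k$ by induction on $k$. This induction needs the standard fact that $Q^i\ast Q^j\subseteq Q^{i+j}$; I would prove it by induction on $i$, expanding $Q^i$ as sums of products of the form $P\ast Q^{i-1}$ or $Q^{i-1}\ast P$.

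However, for non-associative products this fact can fail. So the cleaner route is to note that (d) and (c) both imply directly that $P^k\subseteq V_{n-k+1}$. For $y\in P^i$ and $z\in P^{k-i}$, the product $y\ast z$ is obtained by applying the operator $L_y$ (that is, $P_y$ or $Q_y$) to $z\in V_{n-k+i+1}$, which yields an element of $V_{n-k+i}$. Since $i\ge 1$ only gives $V_{n-k+i}$, I need depth from $y$ as well. I would therefore argue via the flag of ideals $V_j$ and the quotient algebras $V_j/V_{j-1}$, on which $P$ acts trivially: the filtration index of a product is at most the minimum of the two indices minus one. Then by induction the elements of $P^k$ are sums of products nested $k-1$ deep in a binary tree, and along any root-to-leaf path the index drops by one at each node. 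Since the tree has depth at least $\log_2 k$, this gives $P^k=0$ once $\log_2 k\ge n$, and hence $P^{2^n}=0$. This weaker bound suffices for nilpotency.
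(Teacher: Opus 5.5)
Your proof is correct and is essentially the paper's argument: the same cycle of implications with (c) and (d) swapped, and the same step-by-step flag construction from (b). For the return to (a), which the paper just calls straightforward, your final tree-depth argument is the right one: it uses $V_a\ast V_b\subseteq V_{\min(a,b)-1}$ to get $P^{2^n}=0$, and a linear bound is not available here.

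In a write-up, drop two false side claims:
\begin{enumerate}
\item[(1)] That (c) and (d) directly give $P^k\subseteq V_{n-k+1}$. Take $x_3\cdot x_3=x_2$, $x_2\cdot x_2=x_1$, and all other products (including all $\circ$-products) zero. Then (d) holds for the flag $P_{(i)}=\mathrm{span}\{x_1,\dots,x_i\}$ with $n=3$, yet $0\neq x_1=(x_3\cdot x_3)\cdot(x_3\cdot x_3)\in P^4$.
\item[(2)] That strict triangularity of $P_x,Q_x$ alone yields $V_j\ast P\subseteq V_{j-1}$. Without commutativity you also need $P'_x,Q'_x$, which (c) supplies anyway.
\end{enumerate}
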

\begin{proof}
$\eqref{i} \Rightarrow \eqref{ii}$: Let \(B \neq P\) be a subalgebra of \(P\), where \(P\) is nilpotent. Then there exists a minimal integer \(r \geq 2\) such that all products (including Novikov product and associate product) of \(r\) elements of \(P\) belong to \(B\). So there exists an element $z$, which is the product of \((r-1)\) elements and does not belong to \(B\). Then we have \(x\cdot z,z\cdot x ,x\circ z, z\circ x \in B\) for all $x\in P$.\\
$\eqref{ii}\Rightarrow\eqref{iii}$: Taking \(B = 0\) in (b) we have that there is a non-zero element \(x_1 \in P\) such that \(x_1\cdot y = y\cdot x_1=x_1\circ y=y\circ x_1 = 0\) for all \(y \in P\). Now taking \(B = {\bf k}x_1\), which is an ideal of \(P\), we see that there is an element \(x_2 \in P\), \(x_2 \notin {\bf k}x_1\) such that \(x_2\cdot y,y\cdot x_2,x_2\circ y,y\circ x_2 \in {\bf k}x_1\), for all \(y \in P\). Then \({\bf k}x_1 + {\bf k}x_2\) is an ideal of \(P\). Continuing in this way we construct \(n\) linearly independent elements \(x_1, \dots, x_n\) such that \(x_i \notin {\bf k}x_1 + \dots + {\bf k}x_{i-1}\), but \(x_i\cdot y,y\cdot x_i, x_i\circ y, y\circ x_i \in {\bf k}x_1 + \dots + {\bf k}x_{i-1}\) for all \(y \in P\), \(2 \leq i \leq n\). This is the required basis.\\
$\eqref{iii}\Rightarrow \eqref{iv}$: Let \(P_{(i)} = {\bf k}x_1 + \dots + {\bf k}x_i\) (\(1 \leq i \leq n\)).\\
$\eqref{iv}\Rightarrow \eqref{i}$: It is straightforward.
\end{proof}
Similar to left nilpotency and right nilpotency for Novikov algebras, we introduce the following definitions.
\begin{defi}
 Let $(P,\cdot,\circ)$ be a transposed Novikov-Poisson algebra and $A$ be a subalgebra of $P$. Define \(A_L^1 = A\) and \(A_L^n = A_L^{n-1}\cdot A +A_L^{n-1}\circ A\) for \(n \geq 2\).
Then \(A\) is called {\bf right nilpotent} of index \(n\) if \(n\) is the minimal integer such that \(A_L^n = (0)\). Define \(A_R^1 = A\) and \(A_R^n = A\cdot A_R^{n-1} +A\circ A_R^{n-1}\) for \(n \geq 2\).
Then \(A\) is called {\bf left nilpotent} of index \(m\) if \(m\) is the minimal integer such that \(A_R^m = (0)\).
\end{defi}

\begin{pro}\label{idealpro1}
Let $(P,\cdot,\circ)$ be a transposed Novikov-Poisson algebra and $B,C$ be two ideals of $P$. Then $B\circ C$ is also an ideal of $P$.
\end{pro}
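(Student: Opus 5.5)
The plan is to verify the four absorption conditions $P\circ(B\circ C)$, $(B\circ C)\circ P$, $P\cdot(B\circ C)$, $(B\circ C)\cdot P\subseteq B\circ C$ directly on generators. Here $B\circ C$ means the linear span of the elements $b\circ c$ with $b\in B$, $c\in C$, so it suffices to check the conditions for such elements and an arbitrary $x\in P$. The subalgebra condition then comes for free: $(B\circ C)\cdot(B\circ C)+(B\circ C)\circ(B\circ C)\subseteq (B\circ C)\cdot P+(B\circ C)\circ P$, which will already lie in $B\circ C$.

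For the right Novikov action, Eq. (\ref{Novikov1}) gives $(b\circ c)\circ x=(b\circ x)\circ c$. Since $B$ is an ideal, $b\circ x\in B$, so this element lies in $B\circ C$.

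For the left Novikov action, I rewrite Eq. (\ref{Novikov2}) with $(x,y,z)$ replaced by $(x,b,c)$ and solve for $x\circ(b\circ c)$:
\[
x\circ(b\circ c)=(x\circ b)\circ c-(b\circ x)\circ c+b\circ(x\circ c).
\]
Each term on the right is in $B\circ C$, because $x\circ b,\ b\circ x\in B$ and $x\circ c\in C$ by the ideal property.

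For the associative product, I use Eq. (\ref{transNP2}) with $z=x$, $x=b$, $y=c$:
\[
2\,x\cdot(b\circ c)=(x\cdot b)\circ c+b\circ(x\cdot c).
\]
The right-hand side lies in $B\circ C$, again by the ideal property. Since $\operatorname{char}{\bf k}\neq 2$, we can divide by $2$ and conclude $x\cdot(b\circ c)\in B\circ C$. Commutativity of $\cdot$ then handles $(b\circ c)\cdot x$.

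There is no real obstacle here. The only delicate point is the associative product, where one has to pick the right instance of Eq. (\ref{transNP2}) so that $b\circ c$ appears on the left, and then use the standing assumption that the characteristic is not $2$.
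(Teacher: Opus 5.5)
Your proposal is correct and follows the paper's proof. The paper cites a known Novikov-algebra result for the fact that $B\circ C$ is an ideal of $P_N$, which you instead verify directly from \eqref{Novikov1} and \eqref{Novikov2}; it then handles $P\cdot(B\circ C)$ with exactly your instance of \eqref{transNP2}, leaving the division by $2$ implicit.
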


\begin{proof}
By \cite{simple Novikov},  $B\circ C$ is an ideal of $P_N$. By Eq.~\eqref{transNP2}, we obtain
\begin{eqnarray*}
&&P\cdot (B\circ C)\subseteq (B\cdot P)\circ C+B\circ (C\cdot P)\subseteq B\circ C.
\end{eqnarray*}
This completes the proof.
\end{proof}

\begin{pro}\label{idealpro2}
Let $(P,\cdot,\circ)$ be a transposed Novikov-Poisson algebra and $B,C$ be two ideals of $P$. Then $B\cdot C+B\circ C$ is also an ideal of $P$. In particular, $P^m$, $P^{(n)}$, $P_L^m$, $P_R^m$ are ideals, for all $m\ge 1$, $n\ge0$.
\end{pro}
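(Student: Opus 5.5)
Since $B\circ C$ is already an ideal of $P$ by Proposition~\ref{idealpro1}, and a sum of ideals is an ideal, it suffices to show that $B\cdot C$ is an ideal of $P$. Note also that $B\cdot C=C\cdot B$ by commutativity of $\cdot$.

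We check the four closure conditions for $B\cdot C$.

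\emph{Associative products.} By associativity and commutativity, $P\cdot(B\cdot C)=(P\cdot B)\cdot C\subseteq B\cdot C$. Since $\cdot$ is commutative, this also gives $(B\cdot C)\cdot P\subseteq B\cdot C$.

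\emph{Right Novikov multiplication.} Eq.~\eqref{transNP1} gives $(b\cdot c)\circ x=(b\cdot x)\circ c$. This lies in $B\circ C$, but not obviously in $B\cdot C$. I expect this to be the main obstacle. The way around it is to prove the statement as written: the sum $S=B\cdot C+B\circ C$ is closed, even if $B\cdot C$ alone may not be. Since $(B\cdot P)\circ C\subseteq B\circ C\subseteq S$, we get $(B\cdot C)\circ P\subseteq S$.

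\emph{Left Novikov multiplication.} Apply Eq.~\eqref{transNP2} with $z=b$, $x=p$, $y=c$:
\[
p\circ(b\cdot c)=2b\cdot(p\circ c)-(b\cdot p)\circ c .
\]
Here $p\circ c\in C$ because $C$ is an ideal, so $b\cdot(p\circ c)\in B\cdot C$. Also $(b\cdot p)\circ c\in B\circ C$. Hence $P\circ(B\cdot C)\subseteq S$.

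Together with Proposition~\ref{idealpro1} for the $B\circ C$ summand, this shows that $S$ is an ideal.

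For the particular cases we argue by induction, using that each term is built from ideals by the operation $(B,C)\mapsto B\cdot C+B\circ C$. This gives
\[
P^{(n+1)}=S(P^{(n)},P^{(n)}),\qquad P_L^m=S(P_L^{m-1},P),\qquad P_R^m=S(P,P_R^{m-1}),
\]
and $P^m$ is a finite sum of the ideals $S(P^i,P^{m-i})$. Sums of ideals are ideals, which closes the induction.
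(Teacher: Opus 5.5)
Your proof is correct and is the routine verification the paper leaves as ``straightforward''. Proposition~\ref{idealpro1} handles $B\circ C$, Eqs.~\eqref{transNP1}--\eqref{transNP2} absorb the products of $B\cdot C$ with $P$ into $B\cdot C+B\circ C$ (the same computation appears in the paper's proof of Lemma~\ref{minimalideal-Annlem1}), and the special cases then follow by your induction. The one flaw is expository: your opening sentence reduces the problem to showing that $B\cdot C$ itself is an ideal, but what you actually (and correctly) prove is the weaker inclusion $P\cdot(B\cdot C)+(B\cdot C)\cdot P+(B\cdot C)\circ P+P\circ(B\cdot C)\subseteq B\cdot C+B\circ C$, which is what the statement needs.
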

\begin{proof}
It is straightforward.
\end{proof}

\begin{pro}~\cite[Proposition 2.20]{transposed N-P}
Let $(P,\cdot,\circ)$ be a {\transNovikovpoisson algebra}. Then the following identity holds for all $x,y,z\in P$:
\begin{eqnarray}
\label{Tid1}&&(x\circ y)\cdot z=(x\circ z)\cdot y.
\end{eqnarray}
\end{pro}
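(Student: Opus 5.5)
The identity \eqref{Tid1} should follow directly from the two compatibility conditions \eqref{transNP1} and \eqref{transNP2}, together with commutativity of $\cdot$ and the assumption that the characteristic of ${\bf k}$ is not $2$. The Novikov identities \eqref{Novikov1} and \eqref{Novikov2} should not be needed at all. The plan is to apply \eqref{transNP2} twice, with the roles of $y$ and $z$ exchanged, and subtract.

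First, for all $x,y,z\in P$, I write \eqref{transNP2} with multiplier $z$ and with multiplier $y$:
\begin{eqnarray*}
&&2z\cdot(x\circ y)=(z\cdot x)\circ y+x\circ(z\cdot y),\\
&&2y\cdot(x\circ z)=(y\cdot x)\circ z+x\circ(y\cdot z).
\end{eqnarray*}
Next I subtract the second equation from the first. Since $\cdot$ is commutative, $z\cdot y=y\cdot z$, so the two terms $x\circ(z\cdot y)$ and $x\circ(y\cdot z)$ cancel. What remains on the right-hand side is $(z\cdot x)\circ y-(y\cdot x)\circ z$. By commutativity this equals $(x\cdot z)\circ y-(x\cdot y)\circ z$, which vanishes by \eqref{transNP1}.

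Hence $2z\cdot(x\circ y)=2y\cdot(x\circ z)$. Using commutativity of $\cdot$ once more, this reads $2(x\circ y)\cdot z=2(x\circ z)\cdot y$. Since $\mathrm{char}\,{\bf k}\neq 2$, dividing by $2$ gives \eqref{Tid1}.

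There is no real obstacle. The only point to watch is that the division by $2$ uses the standing hypothesis $\mathrm{char}\,{\bf k}\neq 2$ from the Notations.
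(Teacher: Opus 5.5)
Your argument is correct and complete. Swap $y$ and $z$ in \eqref{transNP2} and subtract. The $x\circ(y\cdot z)$ terms cancel by commutativity, and the remaining terms cancel by \eqref{transNP1}. Dividing by $2$, which is allowed since $\mathrm{char}\,{\bf k}\neq 2$, gives \eqref{Tid1}. You are also right that neither associativity of $\cdot$ nor the Novikov identities are used.

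The paper gives no proof of this identity; it quotes it from \cite[Proposition 2.20]{transposed N-P}. Your computation is therefore a self-contained check of the cited fact rather than a comparison with an in-paper argument.
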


The lower central series of transposed Novikov-Poisson algebras can be given in a more concise form.
\begin{pro}\label{nildefi1}
Let $(P,\cdot,\circ)$ be a transposed Novikov-Poisson algebra and $A$ be a subalgebra of $P$. Then we have $A^{k+1}=A\cdot A^{k}+A\circ A^{k}+A^k\circ A$ for $k\geq 1$.
\end{pro}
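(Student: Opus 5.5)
We prove the equality by induction on $k$. Write $R_{k+1}:=A\cdot A^{k}+A\circ A^{k}+A^k\circ A$. The inclusion $R_{k+1}\subseteq A^{k+1}$ is immediate from the definition, taking the terms with $i=1$ and $i=k$. For $k=1$ the two sides coincide, since both equal $A\cdot A+A\circ A$.

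For the reverse inclusion, assume the formula holds for $A^{m}$ with $2\le m\le k$. By the definition of $A^{k+1}$ it suffices to show $A^{i}\cdot A^{j}\subseteq R_{k+1}$ and $A^{i}\circ A^{j}\subseteq R_{k+1}$ whenever $i+j=k+1$. We treat the $\circ$-products first, by induction on $i$.

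\emph{The products $A^i\circ A^j$.} The cases $i=1$ and $j=1$ lie in $R_{k+1}$ by definition. For $i\ge 2$ we expand $A^{i}=A\cdot A^{i-1}+A\circ A^{i-1}+A^{i-1}\circ A$ using the induction hypothesis on $k$. Take $a\in A$, $u\in A^{i-1}$ and $v\in A^{j}$; in each case below the listed terms either lie visibly in $R_{k+1}$ or are handled by the induction on $i$.
\begin{itemize}
\item By Eq.~\eqref{Novikov1}, $(u\circ a)\circ v=(u\circ v)\circ a$. Here $u\circ v\in A^{k}$, so this term lies in $A^k\circ A$.
\item By Eq.~\eqref{Novikov2},
\begin{eqnarray*}
(a\circ u)\circ v=a\circ(u\circ v)+(u\circ a)\circ v-u\circ(a\circ v).
\end{eqnarray*}
The first term lies in $A\circ A^k$ and the second was treated above. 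The last term lies in $A^{i-1}\circ A^{j+1}$, whose first index is smaller, so it is covered by induction on $i$.
\item By Eq.~\eqref{transNP2} with $z=a$,
\begin{eqnarray*}
(a\cdot u)\circ v=2a\cdot(u\circ v)-u\circ(a\cdot v).
\end{eqnarray*}
The first term lies in $A\cdot A^k$. The second lies in $A^{i-1}\circ A^{j+1}$ and is again covered by induction on $i$.
\end{itemize}
Throughout we use $A^{p}\cdot A^{q}+A^p\circ A^q\subseteq A^{p+q}$, which is part of the definition of the lower central series.

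\emph{The products $A^i\cdot A^j$.} By commutativity we may assume $i\ge 2$ and expand $A^i$ as before. Associativity gives $(a\cdot u)\cdot v=a\cdot(u\cdot v)\in A\cdot A^k$. For the remaining two types we use Eq.~\eqref{transNP2}, dividing by $2$, which is allowed since $\mathrm{char}\,{\bf k}\neq 2$:
\begin{eqnarray*}
2v\cdot(a\circ u)&=&(v\cdot a)\circ u+a\circ(v\cdot u),\\
2v\cdot(u\circ a)&=&(v\cdot u)\circ a+u\circ(v\cdot a).
\end{eqnarray*}
In the first line, $a\circ(v\cdot u)\in A\circ A^k$, and $(v\cdot a)\circ u\in A^{j+1}\circ A^{i-1}$ is a $\circ$-product of total degree $k+1$, already treated. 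In the second line, $(v\cdot u)\circ a\in A^k\circ A$, and $u\circ(v\cdot a)\in A^{i-1}\circ A^{j+1}$ is again a $\circ$-product already treated.

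The main obstacle is the $\circ$-case. There one must pick, among Eqs.~\eqref{Novikov1}, \eqref{Novikov2}, \eqref{transNP1}, \eqref{transNP2} and \eqref{Tid1}, the identity that moves weight into the right-hand factor. Eq.~\eqref{transNP1} would only swap the roles of the two factors and leave the first index as large as before, whereas Eq.~\eqref{transNP2} strictly lowers the left index. This is what makes the descending induction on $i$ terminate.
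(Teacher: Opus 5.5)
Your proof is correct, but you organize the inner induction differently from the paper.

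\textbf{The paper's route.} It inducts on $i$ for the whole triple $A^{i}\cdot A^{k+1-i}$, $A^{i}\circ A^{k+1-i}$, $A^{k+1-i}\circ A^{i}$ simultaneously. In effect it works inward from both ends on the smaller index:
\begin{itemize}
\item In $A^{i+1}\cdot A^{k-i}$ and $A^{i+1}\circ A^{k-i}$ it expands the left factor. The swap identities \eqref{Tid1}, \eqref{Novikov1} and \eqref{transNP1} then move $A^{k-i}$ inside, landing in $A^{k+1-i}\cdot A^{i}$, $A^{k+1-i}\circ A^{i}$ or $R_{k+1}$.
\item In $A^{k-i}\circ A^{i+1}$ it expands the right factor and uses \eqref{Novikov2} and \eqref{transNP2}.
\end{itemize}

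\textbf{Your route.} You always expand the left factor and induct only on the left index of $\circ$-products, with \eqref{Novikov2} and \eqref{transNP2} shifting weight to the right. You then reduce every $\cdot$-product to $\circ$-products via \eqref{transNP2} and associativity.

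\textbf{What each buys.} Your argument never uses \eqref{transNP1} or the derived identity \eqref{Tid1}. So it proves the formula for any dialgebra with $(P,\cdot)$ commutative associative and $(P,\circ)$ Novikov that satisfies only \eqref{transNP2}, with $\mathrm{char}\,{\bf k}\neq 2$. The paper's argument is more symmetric and handles $\cdot$-products directly rather than detouring through $\circ$.

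Your closing remark about \eqref{transNP1} holds only for your induction measure. In the paper's scheme the hypothesis covers both $A^{j}\circ A^{k+1-j}$ and $A^{k+1-j}\circ A^{j}$. There, the swap $(a\cdot u)\circ v=(a\cdot v)\circ u$ is exactly the step that closes the case $(A\cdot A^{i})\circ A^{k-i}$.
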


\begin{proof}
Since $A \cdot A^{k}+A\circ A^{k}+A^{k}\circ A \subseteq A^{k+1}$ is obvious, we only need to prove that
\begin{eqnarray*}
A^{k+1}\subseteq A^{k}\cdot A+A\circ A^{k}+A^{k}\circ A\qquad\text{for all }k\geq 1.
\end{eqnarray*}
We proceed by induction on $k$.
If $k=1$, the statement is obvious.
Assume that $A^{n+1}\subseteq A^{n}\cdot A+A\circ A^{n}+A^{n}\circ A$ holds for all $1\leq n\leq k-1$.
Now consider $A^{k+1}$. It suffices to show
\[
A^{i}\cdot A^{k+1-i}+A^{i}\circ A^{k+1-i}+A^{k+1-i}\circ A^{i} \subseteq A^{k}\cdot A+A\circ A^{k}+A^{k}\circ A\qquad\text{for all }1\leq i\leq k.
\]
We prove this by a second induction on $i$.
For $i=1$, the inclusion is clear.
Suppose that $A^{j}\cdot A^{k+1-j}+A\circ A^{k+1-j}+A^{k+1-j}\circ A\subseteq A^{k}\cdot A+A\circ A^{k}+A^{k}\circ A$ holds for all $1\leq j\leq i$. Consider $A^{i+1}\cdot A^{k-i}+A^{i+1}\circ A^{k-i}+A^{k-i}\circ A^{i+1}$.
Since $i+1\leq k$, applying the outer inductive hypothesis, and by Eqs.~\eqref{transNP1},~\eqref{transNP2} and~\eqref{Tid1}, we obtain
\begin{eqnarray*}
A^{i+1}\cdot A^{k-i}&=&\big(A\circ A^{i}+ A^{i}\circ A+A\cdot A^{i}\big) \cdot A^{k-i}\\
&\subseteq& (A\circ A^{k-i})\cdot A^{i}+(A^i\circ A^{k-i})\cdot A+A\cdot (A^i\cdot A^{k-i})\\
&\subseteq& A^{k}\cdot A+A\circ A^{k}+A^{k}\circ A,
\end{eqnarray*}
\begin{eqnarray*}
A^{i+1}\circ A^{k-i}&=&\big(A\circ A^{i}+ A^{i}\circ A+A\cdot A^{i}\big) \circ A^{k-i} \\
&\subseteq& (A\circ A^{k-i})\circ A^{i}+(A^i\circ A^{k-i})\circ A+ (A\cdot A^{k-i})\circ A^i \\
&\subseteq& A^{k}\cdot A+A\circ A^{k}+A^{k}\circ A,
\end{eqnarray*}
and
\begin{eqnarray*}
A^{k-i}\circ A^{i+1}&=&A^{k-i}\circ \big(A\circ A^{i}+ A^{i}\circ A+A\cdot A^{i}\big)  \\
&\subseteq& (A^{k-i}\circ A)\circ A^i+(A\circ A^{k-i})\circ A^{i}+A\circ (A^{k-i}\circ A^i)+(A^{k-i}\circ A^i)\circ A\\
&+& (A^i\circ A^{k-i})\circ A+A^i\circ (A^{k-i}\circ A)+A\cdot (A^{k-i}\circ A^i)+(A\cdot A^{k-i})\circ A^i \\
&\subseteq& A^{k}\cdot A+A\circ A^{k}+A^{k}\circ A.
\end{eqnarray*}
This completes the inner induction, and consequently the outer induction. This completes the proof.
\end{proof}

\section{The nilpotency and solvability of transposed Novikov-Poisson algebras}
In this section, we will introduce the properties of nilpotency, left and right nilpotency, and solvability of transposed Novikov-Poisson algebras.

First, we consider the relationship between the right nilpotency and solvability of transposed Novikov-Poisson algebras. 
\begin{lem}\label{right nil-P^2nil}
Let $(P,\cdot,\circ )$ be a transposed Novikov-Poisson algebra. If \( P \) is right nilpotent, then \( P^2 \) is nilpotent, in particular, \( P \) is solvable.
\end{lem}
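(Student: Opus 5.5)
The plan is to compare the lower central series of $P^2$ with the right series $P_L^m$ of $P$. Concretely, I aim to prove
\[
(P^2)^k\subseteq P_L^{k+1}\qquad\text{for all }k\geq 1 .
\]
Since $P_L^n=0$ for some $n$ by right nilpotency, this gives $(P^2)^{n-1}=0$, so $P^2$ is nilpotent.

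The argument is by induction on $k$. For $k=1$ we have $(P^2)^1=P^2=P\cdot P+P\circ P=P_L^2$. For the inductive step I use Proposition \ref{nildefi1} applied to the subalgebra $A=P^2$:
\[
(P^2)^{k+1}=P^2\cdot (P^2)^k+P^2\circ (P^2)^k+(P^2)^k\circ P^2 .
\]
By the inductive hypothesis it therefore suffices to show that each of
\[
P^2\cdot P_L^{k+1},\qquad P_L^{k+1}\circ P^2,\qquad P^2\circ P_L^{k+1}
\]
lies in $P_L^{k+2}$.

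The first two inclusions are immediate. By commutativity of $\cdot$ we get $P^2\cdot P_L^{k+1}=P_L^{k+1}\cdot P^2\subseteq P_L^{k+1}\cdot P\subseteq P_L^{k+2}$. Likewise $P_L^{k+1}\circ P^2\subseteq P_L^{k+1}\circ P\subseteq P_L^{k+2}$.

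The only real point is the third inclusion, where the element of $P_L^{k+1}$ sits on the right of $\circ$. Take $w\in P_L^{k+1}$ and $x,y\in P$, and split $P^2$ into its two kinds of generators.
\begin{itemize}
\item For $(x\cdot y)\circ w$, Eq.~\eqref{transNP1} gives $(x\cdot y)\circ w=(x\cdot w)\circ y$. Since $P_L^{k+1}$ is an ideal by Proposition \ref{idealpro2}, $x\cdot w=w\cdot x\in P_L^{k+2}$, and then $(x\cdot w)\circ y\in P_L^{k+3}\subseteq P_L^{k+2}$.
\item For $(x\circ y)\circ w$, Eq.~\eqref{Novikov1} gives $(x\circ y)\circ w=(x\circ w)\circ y$. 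Here $x\circ w\in P_L^{k+1}$ because $P_L^{k+1}$ is an ideal, so $(x\circ w)\circ y\in P_L^{k+1}\circ P\subseteq P_L^{k+2}$.
\end{itemize}
This completes the induction.

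For solvability, note that $P^{(1)}=P^2$. The derived series of a dialgebra is contained in its lower central series termwise: by a straightforward induction, $A^{(j)}\subseteq A^{j+1}$, and in fact $A^{(j)}\subseteq A^{2^j}$. Hence nilpotency of $P^2$ gives $(P^2)^{(j)}=0$ for some $j$. Since $(P^2)^{(j)}=P^{(j+1)}$, this says $P^{(j+1)}=0$, so $P$ is solvable.

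I expect the main obstacle to be the term $P^2\circ P_L^{k+1}$. It is resolved exactly by the right-commutativity identities \eqref{transNP1} and \eqref{Novikov1}, together with the fact that each $P_L^m$ is an ideal (Proposition \ref{idealpro2}).
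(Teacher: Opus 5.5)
Your proposal is correct and follows essentially the same route as the paper. You prove the same inclusion $(P^2)^k\subseteq P_L^{k+1}$ by induction via Proposition~\ref{nildefi1}, handling the critical term $P^2\circ P_L^{k+1}$ with \eqref{transNP1}, \eqref{Novikov1} and the ideal property of $P_L^{k+1}$; your argument via $A^{(j)}\subseteq A^{2^j}$ and $(P^2)^{(j)}=P^{(j+1)}$ just spells out the solvability step the paper calls direct.
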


\begin{proof}
We claim that $(P^2)^m\subseteq P_L^{m+1}$ for all $m\geq 1$. Obviously, it holds for $m=1$.
Suppose that $(P^2)^k\subseteq P_L^{k+1}$ holds for all $1\leq k\leq m$. Consider $(P^2)^{m+1}=P^2\cdot (P^2)^m+P^2\circ (P^2)^m+(P^2)^m\circ P^2$. By the hypothesis and Proposition~\ref{idealpro2}, we obtain
\begin{eqnarray*}
(P^2)^{m+1}&\subseteq& P_L^{m+1}\cdot P^2+P_L^{m+1}\circ P^2+P^2\circ P_L^{m+1}\\
&\subseteq& P_L^{m+2}+(P\cdot P)\circ P_L^{m+1}+(P\circ P)\circ P_L^{m+1}\\
&\subseteq&P_L^{m+2}+(P\cdot P_L^{m+1})\circ P+(P\circ P_L^{m+1})\circ P\\
&\subseteq& P_L^{m+2}.
\end{eqnarray*}
This completes the induction. Then this conclusion follows directly.
\end{proof}

\begin{lem}\label{sol-rightnil}
Let $(P,\cdot,\circ )$ be a transposed Novikov-Poisson algebra. For all \( m \geq 0 \) and \( n \geq 0 \), we have \( (P^{(m)})_L^{3^n} \subseteq P^{(m+n)} \).
\end{lem}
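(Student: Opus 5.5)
We reduce to the case $m=0$ and then induct on $n$. Since $P^{(m)}$ is a subalgebra of $P$, it is itself a transposed Novikov-Poisson algebra with the restricted operations. Its derived series satisfies $(P^{(m)})^{(n)}=P^{(m+n)}$ directly from the definition. So it suffices to prove that for every transposed Novikov-Poisson algebra $A$ and every $n\ge 0$,
\[
A_L^{3^n}\subseteq A^{(n)} .
\]
For $n=0$ this is trivial. For the inductive step, the plan is to establish a single \emph{tripling inclusion}, valid for every transposed Novikov-Poisson algebra $A$ and every $k\ge 1$:
\[
A_L^{3k}\subseteq A_L^k\cdot A_L^k+A_L^k\circ A_L^k=(A_L^k)^{(1)} .
\]
Granting this, take $k=3^n$ and use $A_L^{3^n}\subseteq A^{(n)}$ together with monotonicity of $(-)^{(1)}$. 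This gives
\[
A_L^{3^{n+1}}\subseteq (A_L^{3^n})^{(1)}\subseteq (A^{(n)})^{(1)}=A^{(n+1)},
\]
which closes the induction.

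To prove the tripling inclusion, note that $A_L^{j}$ is spanned by iterated right products $(\cdots((x\ast_1 y_1)\ast_2 y_2)\cdots)\ast_{j-1}y_{j-1}$ with each $\ast_i\in\{\cdot,\circ\}$. By Proposition~\ref{idealpro2} (applied inside $A$), each $A_L^j$ is an ideal of $A$. Three identities let us permute the right factors $y_i$ freely among consecutive right multiplications of the same kind:
\begin{itemize}
\item Eq.~\eqref{Novikov1} for two consecutive $\circ$'s;
\item commutativity and associativity for two consecutive $\cdot$'s;
\item Eqs.~\eqref{transNP1} and~\eqref{Tid1} for mixed consecutive pairs.
\end{itemize}
Write an element of $A_L^{3k}$ as $u$ acted on by $2k$ further right multiplications, where $u\in A_L^{k}$. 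The idea is to collect $k$ of these right multipliers into a right factor $v\in A_L^{k}$. This turns the element into $u\circ v$ or $u\cdot v$, up to error terms that themselves lie in $A_L^k\circ A_L^k+A_L^k\cdot A_L^k$.

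To collect factors for a $\cdot$-step we use associativity directly: $(u\cdot y)\cdot z=u\cdot(y\cdot z)$. For a $\circ$-step we use Eq.~\eqref{Novikov2}:
\[
(u\circ y)\circ z=u\circ(y\circ z)+(y\circ u)\circ z-y\circ(u\circ z).
\]
For mixed steps we use Eq.~\eqref{transNP2} in the form
\[
x\circ(z\cdot y)=2z\cdot(x\circ y)-(z\cdot x)\circ y .
\]
The same manipulations appear in the proof of Proposition~\ref{nildefi1} and Lemma~\ref{right nil-P^2nil}, and we use them in the same way.

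The main obstacle is controlling the error terms such as $(y\circ u)\circ z$ and $y\circ(u\circ z)$, in which $u$ has moved to the right of a $\circ$. Here the extra $k$ multiplications (hence the factor $3$ rather than $2$) are used.
\begin{itemize}
\item We first use the remaining right multipliers and Eq.~\eqref{Novikov1} to push the element back into a form $w\ast t$ with $w\in A_L^{2k}$.
\item We then apply the ideal property of $A_L^k$ to write the left-hand side term $y\circ(u\circ z)$ as an element of $A\circ A_L^{k}$ further multiplied on the right.
\item Finally, we re-absorb this into $A_L^k\circ A_L^k$ using Eq.~\eqref{Novikov2} again.
\end{itemize}
If a direct word-rewriting argument becomes unwieldy, we will instead prove by induction on $s$ the auxiliary inclusion
\[
(A_L^{k})_L^{s+1}\subseteq A_L^k\cdot A_L^{s}+A_L^k\circ A_L^{s}+A_L^{k+s}
\]
with bookkeeping of the indices, and then specialize to $s=k$ after using the $2k$ spare multiplications.
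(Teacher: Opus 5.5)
The reduction to $m=0$ and the outer induction are fine. The problem is that the whole content of the lemma sits in your tripling inclusion $A_L^{3k}\subseteq A_L^k\cdot A_L^k+A_L^k\circ A_L^k$, and you only describe it; you do not prove it.

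Moreover, the mechanism you sketch makes no progress. Combining Eq.~\eqref{Novikov2} with Eq.~\eqref{Novikov1} gives $(u\circ y)\circ v-u\circ(v\circ y)=(v\circ y)\circ u-v\circ(u\circ y)$, which is symmetric in $u$ and $v$. So when you move a letter $y$ from one factor to the other, the error terms are again products whose factors have the same depths in the $A_L$-filtration as before, only with the roles of $u$ and $v$ exchanged. In your form $(u\circ y)\circ z=u\circ(y\circ z)+(y\circ u)\circ z-y\circ(u\circ z)$, the error terms $(y\circ u)\circ z=(y\circ z)\circ u$ and $y\circ(u\circ z)$ have a left factor of depth at most $2$. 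The ideal property only says $y\circ(u\circ z)\in A_L^{k+1}$, which discards the factorization you need. ``Re-absorbing with Eq.~\eqref{Novikov2} again'' just regenerates a term of the original shape.

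Your fallback inclusion does not help either. It concerns $(A_L^k)_L^{s+1}$, not $A_L^{3k}$, and its right-hand side contains the non-product term $A_L^{k+s}$. For $s\le k$ it is trivial, because $(A_L^k)_L^{s+1}\subseteq (A_L^k)_L^2$.

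The missing idea is a way to shift right multipliers inside the associator $(a,d,y):=(a\circ d)\circ y-a\circ(d\circ y)$. Eqs.~\eqref{Novikov1} and~\eqref{Novikov2} give $(a\circ s,d,y)=(a,d,y)\circ s=(a,d\circ s,y)$. With this, every error associator can be rebalanced until both of its first two arguments lie in $A_L^k$; for purely $\circ$-words it even yields $A_L^{2k+1}\subseteq A_L^k\circ A_L^k$. For mixed words you would still need:
transfer rules for the $\cdot$-steps, such as $(u\cdot y)\circ v=(v\cdot y)\circ u$ (from Eq.~\eqref{transNP1}) and $(u\circ y)\cdot v=\tfrac12\bigl((v\cdot y)\circ u+u\circ(v\cdot y)\bigr)$ (from Eqs.~\eqref{transNP1} and~\eqref{transNP2});
a formula for $(a\cdot s,d,y)$;
and a termination argument.
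None of this is in your proposal.

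The paper sidesteps all of it by proving a much weaker inclusion: $A_L^q\subseteq A_L^{q-3}\cdot A^{(1)}+A_L^{q-3}\circ A^{(1)}$ for $A=P^{(m)}$. Here only one product of two letters is collected on the right, which is a check on words of length four propagated by right-commutativity. From this it deduces $(P^{(m)})_L^{3n}\subseteq (P^{(m+1)})_L^{n}$, and then inducts on $n$ with $m$ shifted by one. In other words, the paper derives first and takes right powers afterwards. Your order, right powers first and then derive, forces you to collect $k$ letters into a deep right factor, which is a far stronger statement.
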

\begin{proof}
The proof is similar to that in~\cite{Nilpotent Novikov}.
We first claim that $\big(P^{(m)}\big)_L^{q}\subseteq \big(P^{(m)}\big)_L^{q-3}\cdot P^{(m+1)}+\big(P^{(m)}\big)_L^{q-3}\circ P^{(m+1)}$ holds for all integer $q\ge4$. We prove it by induction on $q$. If $q=4$, it is clear. Note that $\big(P^{(m)}\big)_L^{q+1}=\big(P^{(m)}\big)_L^{q}\cdot P^{(m)}+\big(P^{(m)}\big)_L^{q}\circ P^{(m)}$. By the hypothesis, we obtain
\begin{eqnarray*}
\big(P^{(m)}\big)_L^{q+1}&\subseteq&\big(\big(P^{(m)}\big)_L^{q-3}\cdot P^{(m+1)}+\big(P^{(m)}\big)_L^{q-3}\circ P^{(m+1)}\big)\cdot P^{(m)}\\
&+&\big(\big(P^{(m)}\big)_L^{q-3}\cdot P^{(m+1)}+\big(P^{(m)}\big)_L^{q-3}\circ P^{(m+1)}\big)\circ P^{(m)}\\
&\subseteq&\big(\big(P^{(m)}\big)_L^{q-3}\cdot P^{(m)}+\big(P^{(m)}\big)_L^{q-3}\circ P^{(m)}\big)\cdot P^{(m+1)}\\
&+&\big(\big(P^{(m)}\big)_L^{q-3}\cdot P^{(m)}+\big(P^{(m)}\big)_L^{q-3}\circ P^{(m)}\big)\circ P^{(m+1)}\\
&\subseteq& \big(P^{(m)}\big)_L^{q-2}\cdot P^{(m+1)}+\big(P^{(m)}\big)_L^{q-2}\circ P^{(m+1)}.
\end{eqnarray*}
Hence, we complete the induction, and the claim.\par
Next, we prove $\big(P^{(m)}\big)_L^{3n}\subseteq \big(P^{(m+1)}\big)_L^n$ for all $n\ge 1$, $m\ge 0$. We prove it by induction on $n$. When $n=1$, it is clear. By the hypothesis, we obtain
\begin{eqnarray*}
(P^{(m)})_L^{3n+3}&\subseteq& (P^{(m)})_L^{3n}\cdot P^{(m+1)}+  (P^{(m)})_L^{3n}\circ P^{(m+1)}\\
&\subseteq &(P^{(m+1)})_L^{n}\cdot P^{(m+1)}+  (P^{(m+1)})_L^{n}\circ P^{(m+1)}\\
&\subseteq&(P^{(m+1)})_L^{n+1}.
\end{eqnarray*}
This completes the induction. Finally, we prove $(P^{(m)})_L^{3^n} \subseteq P^{(m+n)}$ by induction on $n$. Then we obtain
\begin{eqnarray*}
&&(P^{(m)})_L^{3^{n+1}}\subseteq (P^{(m+1)})_L^{3^{n}}\subseteq P^{(m+n+1)}.
\end{eqnarray*}
This completes the conclusion.
\end{proof}

\begin{thm}\label{nil-soltrans}
Let $(P,\cdot, \circ )$ be a transposed Novikov-Poisson algebra. Then the following statements are equivalent:
\begin{enumerate}
\item[(i)] \( P \) is right nilpotent;
\item[(ii)] \( P^2 \) is nilpotent;
\item[(iii)] \( P \) is solvable.
\end{enumerate}
\end{thm}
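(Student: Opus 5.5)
We prove the cycle (i) $\Rightarrow$ (ii) $\Rightarrow$ (iii) $\Rightarrow$ (i). Almost all of the work has already been done in Lemmas~\ref{right nil-P^2nil} and~\ref{sol-rightnil}.

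\emph{(i) $\Rightarrow$ (ii)} is exactly the first assertion of Lemma~\ref{right nil-P^2nil}: if $P_L^{m+1}=0$, then the inclusion $(P^2)^m\subseteq P_L^{m+1}$ gives $(P^2)^m=0$.

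\emph{(ii) $\Rightarrow$ (iii)}: Suppose $(P^2)^m=0$. We first observe that for any subalgebra $A$ one has $A^{(n)}\subseteq A^{2^n}$. This follows by induction on $n$ from the definition of the lower central series, using $A^i\cdot A^j+A^i\circ A^j\subseteq A^{i+j}$ and the fact that the $A^k$ decrease in $k$. Apply this with $A=P^2$, noting that $P^{(1)}=P\cdot P+P\circ P=P^2$. Then $P^{(n+1)}=(P^2)^{(n)}\subseteq (P^2)^{2^n}$, which is zero as soon as $2^n\geq m$. Hence $P$ is solvable.

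\emph{(iii) $\Rightarrow$ (i)}: Suppose $P^{(n)}=0$. Lemma~\ref{sol-rightnil} with $m=0$ gives $P_L^{3^n}=(P^{(0)})_L^{3^n}\subseteq P^{(n)}=0$, so $P$ is right nilpotent.

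The only point needing care is the containment $A^{(n)}\subseteq A^{2^n}$ used in (ii) $\Rightarrow$ (iii). This holds for arbitrary dialgebras. The induction step reads
\[
A^{(n+1)}=A^{(n)}\cdot A^{(n)}+A^{(n)}\circ A^{(n)}\subseteq A^{2^n}\cdot A^{2^n}+A^{2^n}\circ A^{2^n}\subseteq A^{2^{n+1}}.
\]
The last inclusion holds because $A^{2^n}\cdot A^{2^n}$ and $A^{2^n}\circ A^{2^n}$ are among the summands in the definition of $A^{2^{n+1}}$. If we prefer to avoid this general fact, we can instead simply invoke the second assertion of Lemma~\ref{right nil-P^2nil} after (i). However, the direct route above is needed to close the cycle, and it is straightforward.
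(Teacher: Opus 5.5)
Your proposal is correct and is essentially the paper's proof. The paper likewise gets (i)$\Rightarrow$(ii) from $(P^2)^m\subseteq P_L^{m+1}$ and (iii)$\Rightarrow$(i) from $P_L^{3^n}\subseteq P^{(n)}$, and calls the theorem ``straightforward'' from those two lemmas; the only difference is that you spell out (ii)$\Rightarrow$(iii) via $A^{(n)}\subseteq A^{2^n}$ and $P^{(n+1)}=(P^2)^{(n)}$, which the paper leaves implicit in the ``in particular, $P$ is solvable'' clause of its first lemma.
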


\begin{proof}
It is straightforward by Lemmas~\ref{right nil-P^2nil} and~\ref{sol-rightnil}.
\end{proof}

\begin{ex}\label{ex1}
Let $P$ be a 3-dimensional vector space over $\mathbb{C}$ with a basis $\{e_1, e_2,e_3\}$.
By~\cite{transitive Novikov}, $(P, \circ)$ is a transitive (right nilpotent) Novikov algebra with the non-zero products given by
\begin{eqnarray*}
&&e_2 \circ e_2 = e_3\circ e_3=e_1.
\end{eqnarray*}
It is easy to see that it is solvable. By the method~\cite{transposed N-P}, there is a non-trivial transposed Novikov-Poisson algebra on it,
where the non-zero commutative associative products are given by
\begin{eqnarray*}
 && e_2 \cdot e_2 = e_2\cdot e_3=e_3\cdot e_2=e_3\cdot e_3=e_1.
\end{eqnarray*}
By a straightforward computation, we obtain $P^{(1)}=P^2=P_L^2=\mathbb{C}e_1$ and $(P^2)^1=P^{(2)}=P_L^3=0$. Hence, $P$ is solvable and right nilpotent, and $P^2$ is nilpotent.
\end{ex}

Next, we consider the relationship between the nilpotency and solvability of a transposed Novikov-Poisson algebra and those of its underlying commutative associative algebra and Novikov algebra.
\begin{lem}\label{minimalideal-Annlem1}
Let $B$ be a minimal ideal of a transposed Novikov-Poisson algebra $P$. If $N$  is an associative and Novikov nilpotent ideal of $P$, then $B\subseteq Ann_P(N)=\{ x \mid x\circ N=N\circ x=x\cdot N=0\}$.
\end{lem}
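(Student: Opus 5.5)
We argue by the minimality of $B$. The key observation is that the products of $B$ with $N$ span ideals of $P$ contained in $B$. Minimality then leaves two options: such an ideal is $0$, which is what we want, or it is all of $B$. In the second case, iterating the equality pushes $B$ into arbitrarily high powers of $N$, where nilpotency kills it. We treat the Novikov products first and the associative products second, because the second step needs the vanishing obtained in the first.

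\emph{Reduction.} Since $B$ and $N$ are ideals, $B\cap N$ is an ideal contained in $B$, so $B\cap N=0$ or $B\subseteq N$. The spaces $N\circ B$, $B\circ N$ and $N\cdot B$ all lie in $B\cap N$. Hence if $B\cap N=0$ they vanish, and since $\cdot$ is commutative, $B\subseteq Ann_P(N)$ immediately. We may therefore assume $B\subseteq N$.

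\emph{Novikov part.} By Proposition~\ref{idealpro1}, both $N\circ B$ and $B\circ N$ are ideals of $P$, so $I:=N\circ B+B\circ N\subseteq B$ is an ideal. By minimality, $I=0$ or $I=B$. Suppose $I=B$. Substituting $B=N\circ B+B\circ N$ into itself $k$ times shows that $B$ is spanned by $\circ$-monomials with $k+1$ factors, one from $B$ and the rest from $N$. Since $B\subseteq N$, all factors lie in $N$, so $B\subseteq (N_N)^{k+1}$ for every $k$. As $N_N$ is nilpotent, $B=0$, contradicting minimality. Thus $N\circ B=B\circ N=0$.

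\emph{Associative part.} We show that $N\cdot B$ is an ideal of $P$, using the vanishing just obtained. Closure under left and right multiplication by $P$ for $\cdot$ follows from commutativity and associativity. By Eq.~\eqref{transNP1}, $(N\cdot B)\circ P=(N\cdot P)\circ B\subseteq N\circ B=0$. By Eq.~\eqref{transNP2}, written as $x\circ(z\cdot y)=2z\cdot(x\circ y)-(z\cdot x)\circ y$, we get
\[
P\circ(N\cdot B)\subseteq N\cdot(P\circ B)+(N\cdot P)\circ B\subseteq N\cdot B+N\circ B=N\cdot B .
\]
Hence $N\cdot B\subseteq B$ is an ideal, so it is $0$ or $B$. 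If it were $B$, iterating gives $B=N\cdot N\cdots N\cdot B\subseteq (N_A)^{k}$ for all $k$, again because $B\subseteq N$. Nilpotency of $N_A$ then forces $B=0$, a contradiction. So $N\cdot B=B\cdot N=0$, and together with the Novikov part this gives $B\subseteq Ann_P(N)$.

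The step I expect to need the most care is checking that $N\cdot B$ is closed under $\circ$ with $P$ on both sides. This is exactly why the Novikov part must come first: the terms $N\circ B$ produced by \eqref{transNP1} and \eqref{transNP2} are only harmless once they are known to vanish.
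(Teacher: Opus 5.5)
Your proof is correct and takes essentially the paper's approach. First you use Proposition~\ref{idealpro1} and minimality, together with Novikov nilpotency, to get $B\circ N=N\circ B=0$. Then that vanishing, with Eqs.~\eqref{transNP1} and~\eqref{transNP2}, makes $N\cdot B$ an ideal inside $B$, which associative nilpotency kills. Your preliminary reduction to $B\subseteq N$ via $B\cap N$ is a useful addition: it makes explicit a containment the paper uses silently when it invokes nilpotency, where it follows from $B=B\circ N\subseteq N$.
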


\begin{proof}
By Proposition~\ref{idealpro1}, $B\circ N$ is an ideal of $P$. Since $B\circ N\subseteq B$, the minimality of $B$ implies that either $B\circ N=B$ or $B\circ N=0$. Because $N$ is Novikov nilpotent, we must have $B\circ N=0$. Similarly, we obtain $N\circ B=0$.\par
Moreover, by Eqs.~\eqref{transNP1} and~\eqref{transNP2}, we have
\begin{eqnarray*}
(B\cdot N)\cdot P = B\cdot(N\cdot P)  \subseteq B\cdot N
\end{eqnarray*}
and
\begin{eqnarray*}
(B\cdot N)\circ P+P\circ (B\cdot N) \subseteq(B\cdot P)\circ N+B\cdot (P\circ N)+(P\cdot B)\circ N\subseteq B\cdot N+B\circ N=B\cdot N.
\end{eqnarray*}
It follows that $B\cdot N$ is also an ideal of $P$ contained in $B$. Thus, by the minimality of $B$, we have either $B\cdot N=0$ or $B\cdot N=B$. Since $N$ is associative nilpotent, we conclude $B\cdot N=0$. Hence, we obtain $B\subseteq \operatorname{Ann}_P(N)$.
\end{proof}

\begin{thm}\label{engelthm1}
Let $(P,\cdot,\circ)$ be a finite-dimensional transposed Novikov-Poisson algebra. Then $P_A$ and $P_N$ are nilpotent if and only if $P$ is nilpotent.
\end{thm}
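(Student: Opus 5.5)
The direction ``$P$ nilpotent $\Rightarrow$ $P_A$ and $P_N$ nilpotent'' is immediate. The lower central series of $P_A$ and of $P_N$ are termwise contained in the lower central series $P^n$ of the dialgebra $P$, since each term $P^i\cdot P^{n-i}$ or $P^i\circ P^{n-i}$ appears in the definition of $P^n$. So if $P^m=0$, then both one-operation series vanish at step $m$.

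For the converse I would argue by induction on $\dim P$, using Lemma~\ref{minimalideal-Annlem1} with $N=P$. If $P=0$ there is nothing to prove. Otherwise choose a minimal (nonzero) ideal $B$ of $P$; one exists since $P$ is finite-dimensional. By hypothesis $P$ itself is an associative and Novikov nilpotent ideal of $P$. Lemma~\ref{minimalideal-Annlem1} then gives $B\subseteq \operatorname{Ann}_P(P)$, i.e. $B\circ P=P\circ B=B\cdot P=0$, and $P\cdot B=0$ by commutativity.

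Next pass to the quotient $\bar P=P/B$. It is a transposed Novikov-Poisson algebra of smaller dimension, because $B$ is an ideal and the identities descend. Its associative and Novikov quotients $\bar P_A=P_A/B$ and $\bar P_N=P_N/B$ are quotients of nilpotent algebras, hence nilpotent. By induction $\bar P$ is nilpotent, say $\bar P^m=0$, which means $P^m\subseteq B$. Since $B$ lies in the two-sided annihilator for both products, Proposition~\ref{nildefi1} gives
\[
P^{m+1}=P\cdot P^m+P\circ P^m+P^m\circ P\subseteq P\cdot B+P\circ B+B\circ P=0.
\]
(One could also use the raw definition $P^{m+1}=\sum_i(P^i\cdot P^{m+1-i}+P^i\circ P^{m+1-i})$, but Proposition~\ref{nildefi1} makes the step clean.) Hence $P$ is nilpotent.

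The only genuinely nontrivial ingredient is the annihilation of a minimal ideal, and that is exactly Lemma~\ref{minimalideal-Annlem1}. Its proof uses the step ``$B\circ N=B$ contradicts Novikov nilpotency of $N$''. I would double-check this step: iterating gives $B=(\cdots((B\circ N)\circ N)\cdots)\circ N$. In a Novikov algebra, right multiplications satisfy $(x\circ y)\circ z=(x\circ z)\circ y$, and products of $k$ elements of $N$ in such nested form lie in $N^{k}$ (or rather $P^{k}$-type terms of the Novikov lower central series). So $B\subseteq N_N^{k}$-terms for all $k$, forcing $B=0$. Since the lemma is given earlier, I take it as established. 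The remaining bookkeeping, namely that the quotient inherits the structure and nilpotency of its parts, is routine.
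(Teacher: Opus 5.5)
Your proof is correct and is essentially the paper's argument: induction on $\dim P$, a minimal ideal $B$ annihilated by $P$ via Lemma~\ref{minimalideal-Annlem1} with $N=P$, the induction hypothesis on $P/B$ giving $P^m\subseteq B$, and Proposition~\ref{nildefi1} giving $P^{m+1}=0$. The paper additionally chooses $B\subseteq P\circ P$ and treats the case $P\circ P=0$ separately, which your choice of an arbitrary minimal ideal shows is unnecessary; also, the raw definition of $P^{m+1}$ by itself would not yield $P^{m+1}=0$ (only $P^{2m-1}=0$), so Proposition~\ref{nildefi1} is what actually makes your final step work as written.
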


\begin{proof}
If $P$ is nilpotent, then $P_A$ and $P_N$ are nilpotent clearly.\par
Conversely, assume that $P_A$ and $P_N$ are nilpotent. We prove that $P$ is nilpotent by induction on $\dim(P)$. If $\operatorname{dim}(P)=1$, then $P\circ P=P\cdot P=0$ and the conclusion holds trivially. Assume that the conclusion holds for all transposed Novikov-Poisson algebras of dimensions at most $k$. Suppose that $\operatorname{dim}(P)=k+1$.\par
If $P\circ P=0$, then $P=P_A$. There is nothing to prove. Otherwise, by Proposition~\ref{idealpro1}, there exists a minimal ideal $B$ of $P$ contained in $P\circ P$. Consider the quotient $P/B$. By the induction hypothesis, $P/B$ is nilpotent. Hence there exists a non-negative integer $m$ such that $P^m\subseteq B$. By the proof of Lemma~\ref{minimalideal-Annlem1}, we have $B\subseteq \operatorname{Ann}_P(P)$. Therefore, we obtain
\begin{eqnarray*}
P^{m+1}= P\cdot P^m+ P^{m}\circ P+P\circ P^m\subseteq P\cdot B+ P\circ B+B\circ P=0.
\end{eqnarray*}
This completes the proof.
\end{proof}

By ~\cite{Frattini Novikov}, any finite-dimensional Novikov algebra necessarily has a maximal nilpotent ideal. So we can obtain the following corollary.
\begin{cor}
Let $(P,\cdot,\circ)$ be a finite-dimensional transposed Novikov-Poisson algebra. Then there exists a maximal nilpotent ideal of $P$.
\end{cor}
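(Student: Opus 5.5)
The plan is to show that the sum of two nilpotent ideals of $P$ is again a nilpotent ideal. Finite-dimensionality then finishes the job: take a nilpotent ideal $M$ of maximal dimension (the zero ideal is nilpotent, so such an $M$ exists). For any nilpotent ideal $N$, the sum $M+N$ is a nilpotent ideal containing $M$, so maximality of dimension forces $M+N=M$, i.e.\ $N\subseteq M$. Hence $M$ contains every nilpotent ideal and is the unique maximal nilpotent ideal. The remark about \cite{Frattini Novikov} suggests a second route: apply the Novikov result to $P_N$ and then use Theorem~\ref{engelthm1}. I prefer the direct sum argument, because an ideal of $P_N$ need not be an ideal of $P$.

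For the key step, let $I,J$ be nilpotent ideals of $P$, say $I^a=0$ and $J^b=0$ as dialgebras. Both operations are bilinear, so a product of $n$ elements of $I+J$ with any bracketing and any choice of the two operations expands into a sum of such products whose factors each lie in $I$ or in $J$. Fix one such term and suppose at least $a$ of its factors lie in $I$. I will show the term lies in $I^a$. The argument is the standard one for associative/Lie algebras, adapted to dialgebras. By induction on the bracketing tree, a product containing $s$ factors from $I$, all other factors from $P$, lies in $I^s$ (with $I^0:=P$). This uses two facts. First, $I^s$ is an ideal of $P$ by Proposition~\ref{idealpro2}. Second, the filtration is multiplicative: $I^s\cdot I^t+I^s\circ I^t\subseteq I^{s+t}$, which holds by the very definition of the lower central series $A^n=\sum_{i}(A^i\cdot A^{n-i}+A^i\circ A^{n-i})$. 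Subproducts containing no factor from $I$ lie in $P$, and multiplying an element of $I^s$ on either side by an element of $P$ stays in $I^s$ because $I^s$ is an ideal.

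So with $n=a+b-1$, every term has at least $a$ factors in $I$ or at least $b$ factors in $J$. Each term therefore lies in $I^a$ or $J^b$, both zero, and $(I+J)^{a+b-1}=0$. Here $(I+J)^n$ is spanned by all products of $n$ elements of $I+J$, by an easy induction from the definition. Clearly $I+J$ is an ideal.

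The main obstacle is the multiplicativity/absorption step, specifically confirming that $I^s$ is an ideal of all of $P$, not merely of $I$. Proposition~\ref{idealpro2} states that $P^m$ is an ideal, but for a general ideal $I$ I would rederive it: by induction $I^s=\sum_i (I^i\cdot I^{s-i}+I^i\circ I^{s-i})$, so it suffices that $B\cdot C+B\circ C$ is an ideal of $P$ whenever $B,C$ are ideals of $P$, which is exactly Proposition~\ref{idealpro2}. With that in place the counting argument goes through, and no transposed Novikov-Poisson identities beyond those already encoded in Propositions~\ref{idealpro1} and~\ref{idealpro2} are needed.
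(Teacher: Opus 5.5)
Your argument is correct, but it takes a different route from the paper. The paper presents the corollary as an immediate consequence of two ingredients. The first is Towers' result that a finite-dimensional Novikov algebra has a maximal nilpotent ideal. The second is Theorem~\ref{engelthm1}. Read properly, the argument runs as follows: if $I,J$ are nilpotent ideals of $P$, then $I+J$ is nilpotent for $\circ$ by Towers' result and for $\cdot$ by the commutative associative case. Hence $I+J$, itself a finite-dimensional transposed Novikov-Poisson algebra, is nilpotent by Theorem~\ref{engelthm1}. Applied to ideals of $P$ in this way, the route avoids the issue you raise about ideals of $P_N$; that problem only arises if one tries to use $Nil_N(P)$ directly.

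You instead prove directly, by the standard counting argument, that a sum of nilpotent ideals is nilpotent. The only structural input is that each $I^s$ is an ideal of all of $P$. This follows by induction from Proposition~\ref{idealpro2}, and ultimately from the elementary fact that $B\circ C$ is an ideal of a Novikov algebra. You correctly identified and closed this step.

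Your approach has several advantages. It is self-contained and gives the explicit bound $(I+J)^{a+b-1}=0$. That bound holds without finite-dimensionality, which you need only to pick a nilpotent ideal of maximal dimension. It avoids both the external citation and the minimal-ideal induction behind Theorem~\ref{engelthm1}. It also makes explicit the uniqueness that the subsequent definition of $Nil(P)$ relies on; mere existence of an inclusion-maximal nilpotent ideal is trivial in finite dimension. The paper's route, in turn, shows Theorem~\ref{engelthm1} working as a transfer principle from the two underlying algebras.
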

\begin{proof}
It is straightforward by Theorem~\ref{engelthm1}.
\end{proof}
Then we have the following definition.
\begin{defi}
For a finite-dimensional transposed Novikov-Poisson algebra $(P,\cdot,\circ)$, we denote by $Nil(P)$ the {\bf nilpotent radical} of $P$, that is, the maximal nilpotent ideal of $P$. Similarly, let $Nil_A(P)$ be the nilpotent radical of $P_A$, and $Nil_N(P)$ be the nilpotent radical of $P_N$.
\end{defi}

 In fact, Lemma~\ref{minimalideal-Annlem1} and Theorem~\ref{engelthm1} have a right-nilpotent version.
\begin{lem}\label{minimalideal-Annlem2}
Let $B$ be a minimal ideal of a transposed Novikov-Poisson algebra $P$. If $N$ is an associative nilpotent and Novikov right nilpotent ideal of $P$, then $B\subseteq Ann_L(N)=\{x\in P \mid x\cdot N=x\circ N=0\}$.
\end{lem}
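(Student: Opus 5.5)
The plan is to follow the proof of Lemma~\ref{minimalideal-Annlem1} closely, replacing Novikov nilpotency by right nilpotency. I will show separately that $B\circ N=0$ and $B\cdot N=0$. In each case the relevant product is an ideal of $P$ contained in $B$, and minimality of $B$ then forces it to be $0$ or $B$. Only the products $B\circ N$ and $B\cdot N$, with $N$ on the right, are needed, which is exactly why right nilpotency should suffice.

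\emph{Step 1: $B\circ N=0$.} By Proposition~\ref{idealpro1}, $B\circ N$ is an ideal of $P$, and $B\circ N\subseteq B$ because $B$ is an ideal. By minimality, either $B\circ N=0$ or $B\circ N=B$. Suppose $B\circ N=B\neq 0$. Since $N$ is an ideal, $B=B\circ N\subseteq N$. Iterating, we get
\[
B=(\cdots((B\circ N)\circ N)\cdots)\circ N,
\]
with $k$ factors of $N$, and these left-normed products lie in $N_L^{k+1}$ because $B\subseteq N$. Right nilpotency of $N$ gives $N_L^{k+1}=0$ for large $k$, so $B=0$, a contradiction. If right nilpotency is meant purely for $(N,\circ)$, the same argument works with the $\circ$-only series. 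Hence $B\circ N=0$.

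\emph{Step 2: $B\cdot N$ is an ideal.} This step uses Step~1, and the three required inclusions are as follows.
\begin{itemize}
\item The associative side is immediate: $(B\cdot N)\cdot P=B\cdot(N\cdot P)\subseteq B\cdot N$.
\item By Eq.~\eqref{transNP1}, $(B\cdot N)\circ P=(B\cdot P)\circ N\subseteq B\circ N=0$.
\item By Eq.~\eqref{transNP2}, written as $x\circ(z\cdot y)=2z\cdot(x\circ y)-(z\cdot x)\circ y$ with $z\in B$ and $y\in N$, we get $P\circ(B\cdot N)\subseteq B\cdot(P\circ N)+(B\cdot P)\circ N\subseteq B\cdot N+B\circ N=B\cdot N$.
\end{itemize}

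\emph{Step 3: $B\cdot N=0$.} Since $B\cdot N$ is an ideal contained in $B$, either $B\cdot N=0$ or $B\cdot N=B$. In the latter case $B\subseteq N$, and $B=B\cdot N^{k}$ for all $k$, using associativity. This vanishes because $N$ is associative nilpotent, which contradicts $B\neq 0$.

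Combining Steps 1 and 3 gives $B\subseteq Ann_L(N)$. The main point needing care is Step~1. There one must observe that $B=B\circ N$ forces $B\subseteq N$, so that the iterated products really lie in the right-central series of $N$ itself. Once this is in place, no control of $N\circ B$ is needed.
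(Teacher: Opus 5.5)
Your proof is correct and follows the same route as the paper, whose proof only says it is ``similar to Lemma~\ref{minimalideal-Annlem1}'': apply minimality first to the ideal $B\circ N$, which right nilpotency kills once $B\subseteq N$ is noted, and then to $B\cdot N$, whose ideal property needs only $B\circ N=0$. You have supplied the details the paper leaves out, and you correctly observe that $N\circ B=0$ is never needed.
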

\begin{proof}
The proof is similar to that in Lemma~\ref{minimalideal-Annlem1}.
\end{proof}

\begin{thm}\label{engelthm2}
Let $(P,\cdot,\circ )$ be a finite-dimensional transposed Novikov-Poisson algebra. Then $P_A$ is nilpotent and $P_N$ is right nilpotent if and only if $P$ is right nilpotent.
\end{thm}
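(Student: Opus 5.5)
The plan mirrors the proof of Theorem~\ref{engelthm1}, using Lemma~\ref{minimalideal-Annlem2} in place of Lemma~\ref{minimalideal-Annlem1}.

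For the direct implication, suppose $P$ is right nilpotent. An easy induction shows that every product of $n$ elements in $P_A$ lies in $P_L^n$, because $\cdot$ is associative and commutative, so any bracketing can be rewritten as a left-normed product. Hence $P_A$ is nilpotent. In the same way, the right-nilpotent series of $P_N$ is contained in $P_L^n$, so $P_N$ is right nilpotent.

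For the converse, I will induct on $\dim P$. The case $\dim P=1$ is immediate, since a one-dimensional nilpotent associative algebra and a right nilpotent Novikov algebra both have zero product. Assume $\dim P=k+1$.

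\begin{itemize}
\item If $P\circ P=0$, then $P_L^n$ is the $n$-th power of $P_A$, which vanishes for large $n$, and we are done.
\item Otherwise, by Proposition~\ref{idealpro1}, $P\circ P$ is a nonzero ideal, so we may choose a minimal ideal $B\subseteq P\circ P$ (any nonzero ideal contains a minimal one by finite dimensionality).
\end{itemize}

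The quotient $P/B$ is again a transposed Novikov-Poisson algebra. Its associative part is nilpotent and its Novikov part is right nilpotent, since both properties pass to quotients. By the induction hypothesis, $P/B$ is right nilpotent, so $P_L^m\subseteq B$ for some $m$.

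Now apply Lemma~\ref{minimalideal-Annlem2} with $N=P$. The hypothesis holds because $P$ is itself an associative nilpotent and Novikov right nilpotent ideal. This gives $B\cdot P=B\circ P=0$, and therefore
\[
P_L^{m+1}=P_L^m\cdot P+P_L^m\circ P\subseteq B\cdot P+B\circ P=0.
\]

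The main point to verify is the claimed Lemma~\ref{minimalideal-Annlem2} argument, in particular that $B\circ N$ equals neither $B$ nor a nonzero proper ideal when $N$ is only right nilpotent.

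\begin{itemize}
\item $B\circ N$ is an ideal by Proposition~\ref{idealpro1}, and it lies in $B$, so by minimality it is $0$ or $B$. If $B\circ N=B$, then iterating gives $B=(\cdots(B\circ N)\circ N\cdots)\circ N\subseteq P_L^n$ for all $n$, which is $0$ by right nilpotency of $P_N$. Hence $B\circ N=0$.
\item For $B\cdot N$, the argument of Lemma~\ref{minimalideal-Annlem1} shows it is an ideal, using $B\circ N=0$ and Eqs.~\eqref{transNP1}, \eqref{transNP2}. If $B\cdot N=B$, iterating gives $B\subseteq B\cdot N^{n}$, which is $0$ by associative nilpotency. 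Hence $B\cdot N=0$.
\end{itemize}

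Only left annihilation is needed for the right-normed series, so the proof closes.
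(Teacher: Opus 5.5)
Your proof is correct and is essentially the paper's argument. The paper proves this theorem by repeating the proof of Theorem~\ref{engelthm1} (induction on $\dim P$, a minimal ideal $B\subseteq P\circ P$, and $P_L^m\subseteq B$ from the quotient), using Lemma~\ref{minimalideal-Annlem2} with $N=P$ exactly as you do, and your verification of that lemma is sound. The only slip is notational: in the step ruling out $B\circ N=B$, the iterated products $(\cdots(B\circ N)\circ N\cdots)\circ N$ belong in the right-nilpotent series of the Novikov algebra $N$ (for $N=P$, that of $P_N$), not in $P_L^n$, whose vanishing is what you are trying to prove.
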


\begin{proof}
The proof is similar to Theorem~\ref{engelthm1} by Lemma~\ref{minimalideal-Annlem1}.
\end{proof}

\begin{lem}\label{nil-solNovikov}~\cite[Theorem 3.3]{Nilpotent Novikov}
Let $N$ be a Novikov algebra. Then the following statements are equivalent:
\begin{enumerate}
\item[(i)] \( N \) is right nilpotent;
\item[(ii)] \( N^2 \) is nilpotent;
\item[(iii)] \( N \) is solvable.
\end{enumerate}
\end{lem}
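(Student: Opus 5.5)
The plan is to obtain this as a special case of Theorem~\ref{nil-soltrans}, rather than reproving the Shestakov--Zhang argument from scratch. Given a Novikov algebra $(N,\circ)$, equip $N$ with the zero commutative associative product, $x\cdot y:=0$ for all $x,y\in N$. I first check that $(N,\cdot,\circ)$ is a transposed Novikov-Poisson algebra. The zero product is trivially commutative and associative, and $(N,\circ)$ is Novikov by hypothesis. For Eq.~\eqref{transNP1}, both sides $(x\cdot y)\circ z$ and $(x\cdot z)\circ y$ are $0$. For Eq.~\eqref{transNP2}, the left side $2z\cdot(x\circ y)$ and both terms $(z\cdot x)\circ y$ and $x\circ(z\cdot y)$ on the right vanish, so the identity holds.

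Next I compare the series attached to the dialgebra $P=(N,0,\circ)$ with those of the Novikov algebra $N$. Since every $\cdot$-product is zero, each term of the form $X\cdot Y$ disappears from the definitions. Hence $P_L^n=N_L^n$ is the right-nilpotency series of $N$, and $P^{(n)}=N^{(n)}$ is the derived series of $N$. Likewise, for the subalgebra $P^2=N\circ N=N^2$, the lower central series of $P^2$ as a dialgebra coincides with the lower central series of $N^2$ as a Novikov algebra. Therefore right nilpotency, nilpotency of the square, and solvability of $N$ are respectively equivalent to right nilpotency of $P$, nilpotency of $P^2$, and solvability of $P$.

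Applying Theorem~\ref{nil-soltrans} to $P$ then gives (i)$\Leftrightarrow$(ii)$\Leftrightarrow$(iii) for $N$. The step I regard as the main obstacle is making sure this is not circular: that Theorem~\ref{nil-soltrans}, i.e.\ Lemmas~\ref{right nil-P^2nil} and~\ref{sol-rightnil} together with Propositions~\ref{idealpro2} and~\ref{nildefi1}, uses only the defining identities and Eq.~\eqref{Tid1}, and never the present statement. Rereading those proofs confirms this. The only Novikov input is the right-commutativity~\eqref{Novikov1}, used to move $P^{(m+1)}$ outward in Lemma~\ref{sol-rightnil}, and~\eqref{Novikov2}, used in Proposition~\ref{nildefi1}. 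With $\cdot=0$, every identity involving $\cdot$ holds trivially.

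Should one prefer a self-contained argument, the same computations can be carried out directly in $N$. For (i)$\Rightarrow$(ii), I would show $(N^2)^m\subseteq N_L^{m+1}$ by induction, using $(N\circ N)\circ N_L^{m+1}\subseteq (N\circ N_L^{m+1})\circ N$. For (iii)$\Rightarrow$(i), I would show $(N^{(m)})_L^{3^n}\subseteq N^{(m+n)}$ exactly as in Lemma~\ref{sol-rightnil}. The implication (ii)$\Rightarrow$(iii) is immediate, since $N^{(k+1)}\subseteq (N^2)^{2^k}$.
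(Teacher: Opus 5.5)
Your argument is correct, but the paper gives no proof of this lemma. It imports it from Shestakov--Zhang (Theorem 3.3 there) and uses it only in the theorem characterizing solvability of $P$ through $P_A$ and $P_N$. You instead note that $(N,0,\circ)$ is a transposed Novikov--Poisson algebra. With $\cdot=0$ the series $P_L^n$, $P^{(n)}$ and $(P^2)^k$ reduce to their Novikov counterparts, so the lemma is exactly the case $\cdot=0$ of Theorem~\ref{nil-soltrans}. This is legitimate. Lemmas~\ref{right nil-P^2nil} and~\ref{sol-rightnil}, together with Propositions~\ref{idealpro2} and~\ref{nildefi1}, never invoke the Novikov result. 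Theorem~\ref{nil-soltrans} also has no finite-dimensionality hypothesis, so it matches the generality of the lemma. Your route makes the citation redundant and the paper self-contained on this point; it runs the paper's ``similar to Shestakov--Zhang'' generalization back down to the Novikov case. The citation, by contrast, rests on a refereed proof rather than on the paper's lemmas, some of whose steps are only asserted.

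One correction to your non-circularity audit: the Novikov identities enter in more places than you list. The base case $q=4$ of the claim in Lemma~\ref{sol-rightnil}, which the paper calls clear, does not follow from right-commutativity alone. Take $M=N^{(m)}$, $u\in M\circ M$ and $c,d\in M$. You need Eq.~\eqref{Novikov2} together with Eq.~\eqref{Novikov1} to write $(u\circ c)\circ d=u\circ(c\circ d)+(c\circ d)\circ u-c\circ(u\circ d)\in M\circ(M\circ M)$. Likewise, Eq.~\eqref{Novikov2} is what makes $N_L^{n}$ a left ideal. Lemma~\ref{right nil-P^2nil} needs this, and it also uses Eq.~\eqref{Novikov1}. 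None of this threatens the reduction, but your self-contained version should include these steps explicitly.
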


\begin{thm}
 Let $(P,\cdot,\circ )$ be a finite-dimensional transposed Novikov-Poisson algebra. Then $P$ is solvable if and only if $P_A$ and $P_N$ are solvable.
\end{thm}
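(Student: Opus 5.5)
One direction is immediate. If $P$ is solvable, then $P^{(m)}=0$ for some $m$. Since $(P_A)^{(n)}\subseteq P^{(n)}$ and $(P_N)^{(n)}\subseteq P^{(n)}$ for all $n$ (by induction on $n$, each derived term of the single-product algebra is contained in the corresponding derived term of the dialgebra), both $P_A$ and $P_N$ are solvable.

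For the converse, I would reduce to Theorem~\ref{engelthm2} and Theorem~\ref{nil-soltrans}. The plan has four steps.

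First, I need $P_A$ to be nilpotent. A solvable commutative associative algebra is nilpotent, since for commutative products $(P_A)^{(n)}$ contains all products of $2^n$ elements. Concretely, by induction one shows that the $2^n$-th power of $P_A$ lies in $(P_A)^{(n)}$; commutativity and associativity let us regroup any product of $2^{n+1}$ elements as a product of two products of $2^n$ elements. So $P_A$ is nilpotent.

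Second, since $P_N$ is a solvable Novikov algebra, Lemma~\ref{nil-solNovikov} shows that $P_N$ is right nilpotent.

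Third, $P$ is finite-dimensional, $P_A$ is nilpotent and $P_N$ is right nilpotent, so Theorem~\ref{engelthm2} gives that $P$ is right nilpotent.

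Fourth, Theorem~\ref{nil-soltrans} then yields that $P$ is solvable.

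The step I expect to need the most care is the appeal to Theorem~\ref{engelthm2}, whose proof is only sketched in the paper via Lemma~\ref{minimalideal-Annlem2}. If I had to check it, I would run the same induction on $\dim P$ as in Theorem~\ref{engelthm1}. If $P\circ P=0$, then $P_L^n=(P_A)^n$ and there is nothing to prove. Otherwise, take a minimal ideal $B\subseteq P\circ P$ and apply induction to $P/B$, whose associative part is still nilpotent and whose Novikov part is still right nilpotent, to get $P_L^m\subseteq B$. Lemma~\ref{minimalideal-Annlem2} gives $B\subseteq Ann_L(P)$, hence $P_L^{m+1}=P_L^m\cdot P+P_L^m\circ P=0$.

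The one point to verify inside Lemma~\ref{minimalideal-Annlem2} is that $B\circ N=B$ is impossible when $N$ is only right nilpotent. Iterating gives $B=(\cdots(B\circ N)\circ N\cdots)\circ N\subseteq N_L^{k}$ for large $k$ when $B\subseteq N=P$, which vanishes. So $B\circ N=0$. The associative part, $B\cdot N=0$, is handled as in Lemma~\ref{minimalideal-Annlem1}, using only $B\circ N=0$.
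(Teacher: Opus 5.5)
Your proposal is correct and follows the paper's proof. Necessity is immediate, and for sufficiency you go through Lemma~\ref{nil-solNovikov} and Theorem~\ref{engelthm2} to get right nilpotency, then apply Theorem~\ref{nil-soltrans}; you also make explicit two points the paper leaves implicit. The first is that solvability of $P_A$ gives nilpotency, since $(P_A)^{(n)}=(P_A)^{2^n}$, which needs only associativity. The second is that Theorem~\ref{engelthm2} goes through with Lemma~\ref{minimalideal-Annlem2} applied to $N=P$, and your check of that is sound.
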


\begin{proof}
The necessity is clear. Now we prove the sufficiency.  Since $P_A$ and $P_N$ are solvable, by Lemma~\ref{nil-solNovikov} and Theorem~\ref{engelthm2}, $P$ is right nilpotent. Hence, $P$ is solvable by Theorem~\ref{nil-soltrans}.
\end{proof}

\begin{cor}
Let $(P,\cdot,\circ )$ be a finite-dimensional solvable transposed Novikov-Poisson algebra. Then $Nil(P)=Nil_N(P)$.
\end{cor}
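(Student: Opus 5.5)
Since $P$ is finite-dimensional, both $Nil(P)$ and $Nil_N(P)$ exist, and I will prove the two inclusions separately.

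\textbf{The inclusion $Nil(P)\subseteq Nil_N(P)$.} The ideal $Nil(P)$ of $P$ is in particular an ideal of $P_N$. Nilpotency of the dialgebra $Nil(P)$ implies nilpotency of its Novikov part, because the Novikov lower central series is contained termwise in the dialgebra lower central series. Hence $Nil(P)$ is a nilpotent ideal of $P_N$, and so $Nil(P)\subseteq Nil_N(P)$.

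\textbf{The reverse inclusion.} Put $I=Nil_N(P)$; it is nilpotent as a Novikov algebra. I will show two things.
\begin{enumerate}
\item $I$ is an ideal of the full dialgebra $P$, that is, $P\cdot I\subseteq I$.
\item $I$ is nilpotent as a commutative associative algebra.
\end{enumerate}
Then $I$, with the restricted products, is a transposed Novikov-Poisson algebra with $I_A$ and $I_N$ nilpotent. Theorem~\ref{engelthm1} makes $I$ nilpotent, so $I\subseteq Nil(P)$.

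\textbf{Step (2), assuming (1).} This is where solvability enters. By the previous theorem $P$ is solvable, so by Theorem~\ref{nil-soltrans} the ideal $P^2$ is nilpotent. The products of $I_A$ lie in $I\cdot I\subseteq P\cdot P\subseteq P^2$. Iterating, an associative product of $r$ elements of $I$ equals $a\cdot w$ with $a\in I$ and $w$ a product of $r-1$ elements of $I$. By induction such a product lies in $(P^2)^{r-1}$, using $P\cdot (P^2)^{j}\subseteq (P^2)^{j+1}$ in the form given by Proposition~\ref{nildefi1}. Hence $I_A$ is nilpotent.

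Alternatively, one can use $x\cdot y\in P^2$ and argue inside $P_A$ directly: $P_A$ is solvable commutative associative, and a solvable commutative associative algebra is nilpotent since $(P_A)^{(n)}=P_A^{2^n}$. So $P_A$ is already nilpotent and so is $I_A$; this route is cleaner.

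\textbf{Step (1), the main obstacle.} I need $P\cdot Nil_N(P)\subseteq Nil_N(P)$. The natural approach is to show that $J=I+P\cdot I$ is again a nilpotent ideal of $P_N$, so that maximality gives $J=I$.

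That $J$ is an ideal of $P_N$ comes from Eqs.~\eqref{transNP1}, \eqref{transNP2} and \eqref{Tid1}:
\begin{itemize}
\item $(x\cdot i)\circ y=(x\cdot y)\circ i\in P\circ I\subseteq I$;
\item $y\circ(x\cdot i)=2x\cdot(y\circ i)-(x\cdot y)\circ i\in P\cdot I+I$.
\end{itemize}

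For Novikov nilpotency of $J$, I would use $P_A$ nilpotent, say $P_A^s=0$. Products in $J$ can then be rewritten via \eqref{transNP2} and \eqref{Tid1} so that $\cdot$-factors accumulate, which I expect to give $J_N^{t}\subseteq \sum_{a+b\ge \ldots}P_A^{a}\cdot I_N^{b}$. This should vanish for large $t$, because either the $\cdot$-degree reaches $s$ or the Novikov degree in $I$ is large.

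If that bookkeeping becomes messy, I would instead argue via a minimal-ideal induction on $\dim P$, as in Theorem~\ref{engelthm1}. That route uses Lemma~\ref{minimalideal-Annlem1} to place a minimal ideal $B\subseteq I$ in $\operatorname{Ann}_P(I)$, then passes to $P/B$ and lifts $I/B\subseteq Nil(P/B)$ back.
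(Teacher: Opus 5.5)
Your inclusion $Nil(P)\subseteq Nil_N(P)$, your Step (2) via the second route, and the closing appeal to Theorem~\ref{engelthm1} are all fine. The second route works because $P_A$ is solvable and $(P_A)^{(n)}=P_A^{2^n}$ for any associative algebra, so $P_A$, and hence $I_A$, is nilpotent.

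Discard the first route of Step (2). Proposition~\ref{nildefi1} applied to $P^2$ only controls $P^2\cdot(P^2)^j$, not $P\cdot(P^2)^j$, and the inclusion $P\cdot(P^2)^j\subseteq(P^2)^{j+1}$ is false in general. For example, take $P_A=x\,\mathbf{k}[x]/(x^4)$ with $\circ=0$: then $x\cdot(x\cdot x)\neq 0$ while $(P^2)^2=0$.

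The real gap is Step (1), which you yourself call the main obstacle and do not resolve. The estimate $J_N^t\subseteq\sum P_A^a\cdot I_N^b$ is stated only as an expectation. The fallback through Lemma~\ref{minimalideal-Annlem1} is circular: that lemma requires $N=I$ to already be an ideal of the dialgebra $P$, and that is exactly what Step (1) is meant to establish.

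The missing observation takes one line, and you already used its ingredient in Step (2). Since $P$ is solvable, $P^2$ is nilpotent by Theorem~\ref{nil-soltrans}, and it is an ideal of $P$ by Proposition~\ref{idealpro2}. Hence $P^2\subseteq Nil(P)\subseteq Nil_N(P)$ by your first inclusion (or directly, since $P^2$ is a nilpotent ideal of $P_N$). Therefore
\[
P\cdot Nil_N(P)\subseteq P\cdot P\subseteq P^2\subseteq Nil_N(P),
\]
so $Nil_N(P)$ is an ideal of $P$ with no analysis of $J=I+P\cdot I$ needed; indeed $J=I$ a posteriori. This is exactly the paper's argument. Your Step (2) together with Theorem~\ref{engelthm1} then spells out the final step the paper leaves implicit: the ideal $Nil_N(P)$ is nilpotent as a dialgebra and therefore lies in $Nil(P)$.
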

\begin{proof}
Obviously, we have $Nil(P)\subseteq Nil_N(P)$. By Theorem~\ref{nil-soltrans}, we obtain that $Nil_N(P)\cdot P\subseteq P^2\subseteq Nil(P)\subseteq Nil_N(P)$, which implies that $Nil_N(P)$ is an ideal of $P$. Hence, we have $Nil(P)=Nil_N(P)$.
\end{proof}

\begin{defi}\cite{Frattini of dialgebras}
If \( B, C \) are ideals of a finite-dimensional dialgebra \( P \) with \( C \subset B \), a {\bf chief series} of \( P \) from \( C \) to \( B \) is a series
\( C = C_0 \subset C_1 \subset \cdots \subset C_r = B \), where \( C_i \) are ideals of \( P \) and \( C_{i+1}/C_i \) is a minimal ideal of \( P/C_i \) for
\( 0 \leq i \leq r - 1 \). The factor algebras \( C_{i+1}/C_i \) in this series are called {\bf chief factors}.
\end{defi}

\begin{lem}\label{chief ideal lem}
Let $(P,\cdot,\circ )$ be a finite-dimensional left (resp. right) nilpotent transposed Novikov-Poisson algebra and $B/C$ be a chief factor of $P$. Then $P\cdot B+P\circ B\subseteq C$ (resp. $B\cdot P+B\circ P\subseteq C$).
\end{lem}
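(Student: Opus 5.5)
The plan is to exploit minimality of the chief factor. Treat the left nilpotent case; the right nilpotent case is symmetric, with $P_L^n$ in place of $P_R^n$ and right multiplications in place of left ones.

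\emph{Step 1: $P\cdot B+P\circ B+C$ is an ideal.} Since $B/C$ is a chief factor, $B/C$ is a minimal ideal of $P/C$. Set $D:=P\cdot B+P\circ B+C$, so that $C\subseteq D\subseteq B$. I claim $D$ is an ideal of $P$; the argument mirrors Proposition~\ref{idealpro1} and the proof of Lemma~\ref{minimalideal-Annlem1}.
\begin{itemize}
\item For $P\circ B$: we have $P\circ(P\circ B)\subseteq (P\circ P)\circ B+P\circ(P\circ B)$ by Eq.~\eqref{Novikov2}. Also $(P\circ B)\circ P=(P\circ P)\circ B$ by Eq.~\eqref{Novikov1}.
\item For mixed terms: $P\cdot(P\circ B)\subseteq (P\cdot P)\circ B+P\circ(P\cdot B)$ by Eq.~\eqref{transNP2}, and $(P\circ B)\cdot P=(P\circ P)\cdot B$ by Eq.~\eqref{Tid1}.
\item For $P\cdot B$: it is stable under $\cdot$ by associativity and commutativity. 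We have $(P\cdot B)\circ P=(P\cdot P)\circ B$ by Eq.~\eqref{transNP1}. For $P\circ(P\cdot B)$, use Eq.~\eqref{transNP2} in the form $x\circ(z\cdot y)=2z\cdot(x\circ y)-(z\cdot x)\circ y$, which places it in $P\cdot(P\circ B)+P\circ B\subseteq D$ after the previous item.
\end{itemize}
All these inclusions land in $P\cdot B+P\circ B+C$, using that $C$ is an ideal. The main care is ensuring each term ends with $B$ in a "right" slot, or is rewritten that way. This bookkeeping is the only real obstacle, and Eqs.~\eqref{Novikov1}, \eqref{transNP1} and \eqref{Tid1} are exactly what move $B$ to the right.

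\emph{Step 2: reduce to $D\ne B$.} By minimality of $B/C$, either $D=C$, which is the desired inclusion $P\cdot B+P\circ B\subseteq C$, or $D=B$. So it suffices to rule out $D=B$.

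\emph{Step 3: rule out $D=B$.} Suppose $D=B$, i.e.\ $B=P\cdot B+P\circ B+C$. Iterating, by induction on $n$,
\[
B\subseteq P\cdot(P\cdot B+P\circ B+C)+P\circ(P\cdot B+P\circ B+C)+C\subseteq \cdots\subseteq X_n+C,
\]
where $X_n$ is a sum of left-normed products $P\ast_1(P\ast_2(\cdots(P\ast_{n-1}B)))$ with $\ast_i\in\{\cdot,\circ\}$. Since $B\subseteq P$, each such product lies in $P_R^n$ (by the definition of $A_R^n$), so $B\subseteq P_R^n+C$ for all $n$.

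Left nilpotency gives $P_R^m=0$ for some $m$, hence $B\subseteq C$. This contradicts $C\subsetneq B$, so $D=C$, which proves the lemma.

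\emph{Right nilpotent case.} Here one instead shows $B\cdot P+B\circ P+C$ is an ideal, using Eqs.~\eqref{Novikov1}, \eqref{transNP1} and \eqref{Tid1} together with commutativity of $\cdot$. Iterating then gives $B\subseteq P_L^n+C$, and the argument concludes identically.
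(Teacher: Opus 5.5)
Your argument is the same as the paper's: show $P\cdot B+P\circ B+C$ is an ideal (the paper simply cites Proposition~\ref{idealpro2} for the ideals $P$ and $B$). Minimality of $B/C$ then leaves $D=C$ or $D=B$, and you rule out $D=B$ by iterating into $P_R^n+C$ and invoking left nilpotency.

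One small correction concerns the right nilpotent case. There the nontrivial inclusions are the left multiplications $P\circ(B\circ P)$ and $P\circ(B\cdot P)$. These need Eqs.~\eqref{Novikov2} and~\eqref{transNP2}, or again just Proposition~\ref{idealpro2}. The identities you list, Eqs.~\eqref{Novikov1}, \eqref{transNP1} and \eqref{Tid1}, cannot rewrite $x\circ(b\circ y)$ or $x\circ(b\cdot y)$ for generic $x$.

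Similarly, in the left case the left multiplications are automatic from $P\cdot B+P\circ B\subseteq B$. Your use of \eqref{Novikov2} there is circular but unnecessary, so it does no harm.
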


\begin{proof}
Here we only prove the left nilpotent case.  The right nilpotent case is similar. By Proposition~\ref{idealpro2}, we obtain that $P\cdot B+P\circ B+C$ is an ideal of $P$. Thus we have $P\cdot B+P\circ B+C\subseteq C$ or $P\cdot B+P\circ B+C=B$. Suppose that the latter holds. Since $P$ is left nilpotent, we have
\begin{eqnarray*}
&&B=P\cdot B+P\circ B+C=P\cdot (P\cdot B+P\circ B)+P\circ (P\cdot B+P\circ B)+C+...=C,
\end{eqnarray*}
which leads to a contradiction. Hence, $P\cdot B+P\circ B\subseteq C$. Similarly, if $P$ is right nilpotent, then $B\cdot P+B\circ P\subseteq C$.
\end{proof}

\begin{thm}\label{weak nil-nil}
Let $(P,\cdot ,\circ )$ be a finite-dimensional transposed Novikov-Poisson algebra. If $P$ is left and right nilpotent, then $P$ is nilpotent.
\end{thm}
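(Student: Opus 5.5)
Take a chief series of $P$ from $0$ to $P$,
\[
0=C_0\subset C_1\subset\cdots\subset C_r=P,
\]
which exists because $P$ is finite-dimensional: refine any chain of ideals until each factor $C_{i+1}/C_i$ is a minimal ideal of $P/C_i$. The idea is to apply Lemma~\ref{chief ideal lem} twice to each chief factor $C_{i+1}/C_i$, once using left nilpotency and once using right nilpotency. Left nilpotency gives
\[
P\cdot C_{i+1}+P\circ C_{i+1}\subseteq C_i ,
\]
and right nilpotency gives
\[
C_{i+1}\cdot P+C_{i+1}\circ P\subseteq C_i .
\]
Since $\cdot$ is commutative, $C_{i+1}\cdot P=P\cdot C_{i+1}$ in any case. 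Together these say
\[
P\cdot C_{i+1}+C_{i+1}\cdot P+P\circ C_{i+1}+C_{i+1}\circ P\subseteq C_i \qquad\text{for all } 0\le i\le r-1 .
\]

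Next I will show by induction on $k$ that $P^{k+1}\subseteq C_{r-k}$ for $0\le k\le r$. The case $k=0$ is $P^1=P=C_r$. For the inductive step I use Proposition~\ref{nildefi1}, which gives $P^{k+2}=P\cdot P^{k+1}+P\circ P^{k+1}+P^{k+1}\circ P$. Each term is a product of $P$ with $P^{k+1}\subseteq C_{r-k}$, on one side or the other, so by the displayed inclusion
\[
P^{k+2}\subseteq C_{r-k-1}.
\]
Taking $k=r$ gives $P^{r+1}\subseteq C_0=0$, so $P$ is nilpotent.

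An alternative finish avoids Proposition~\ref{nildefi1}: refine the series to one-dimensional steps as in Proposition~\ref{dialgebra nil4}\eqref{iv}. The chief-series argument is more direct, however.

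The main obstacle is checking that Lemma~\ref{chief ideal lem} applies in the form needed. Its proof uses that $P\cdot B+P\circ B+C$ is an ideal, and that the equality $B=P\cdot B+P\circ B+C$ iterates down to $C$ because $P_R^m=0$, and symmetrically with $P_L^m=0$. I will also check that the chief series really exists, that is, that minimal ideals of $P/C_i$ lift to ideals of $P$ strictly between $C_i$ and the next term. This holds by finite-dimensionality and the correspondence between ideals of $P/C_i$ and ideals of $P$ containing $C_i$. Beyond these checks the argument is routine.
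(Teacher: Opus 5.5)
Your proof is correct and follows the paper's argument: apply Lemma~\ref{chief ideal lem} to every chief factor, once using left nilpotency and once using right nilpotency, so that $P$ annihilates each chief factor on both sides. The only difference is the finish: the paper deduces that every chief factor is one-dimensional and then invokes the chain-of-ideals criterion of Proposition~\ref{dialgebra nil4} (your ``alternative finish''), whereas you descend the chief series directly with the simplified lower central series of Proposition~\ref{nildefi1}, which avoids both the dimension count and the general-dialgebra step.
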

\begin{proof}
Let $B/C$ be a chief factor of $P$. Then we have $B\cdot P+B\circ P+P\circ B\subseteq C$. It implies that $\operatorname{dim}(B/C)=1$. So there is a chain of ideals of $P$
 \begin{eqnarray*}
&&0 = P_{(0)} \subset P_{(1)} \subset \cdots \subset P_{(n)} = P,
 \end{eqnarray*}
 where \(\dim P_{(i)} = i\) and \(P\cdot  P_{(i)} + P_{(i)} \circ P+P\circ P_{(i)} \subseteq P_{(i-1)}\). Hence, $P$ is nilpotent by Theorem~\ref{dialgebra nil4}.
\end{proof}

\begin{cor}
Let $(P,\cdot ,\circ )$ be a finite-dimensional transposed Novikov-Poisson algebra. If $P$ is left nilpotent, then $P$ is nilpotent.
\end{cor}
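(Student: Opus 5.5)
By Theorem~\ref{weak nil-nil}, it suffices to show that a left nilpotent transposed Novikov-Poisson algebra $P$ is also right nilpotent. The natural route goes through the underlying structures and Theorem~\ref{engelthm2}. That theorem says $P$ is right nilpotent as soon as $P_A$ is nilpotent and $P_N$ is right nilpotent. So the plan is to extract these two facts from left nilpotency of $P$, then invoke Theorem~\ref{engelthm2}, and finally Theorem~\ref{weak nil-nil}.

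\emph{The associative part.} Left nilpotency of $P$ gives $P_R^m=0$ for some $m$. Since $(P_A)_R^m\subseteq P_R^m$, every product $x_1\cdot(x_2\cdot(\cdots\cdot x_m))$ vanishes. Because $(P,\cdot)$ is commutative and associative, every product of $m$ elements can be rearranged into this right-normed form. Hence $P_A^m=0$, so $P_A$ is nilpotent.

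\emph{The Novikov part.} Here $(P_N)_R^m\subseteq P_R^m=0$, so $P_N$ is left nilpotent in the sense $P\circ(P\circ(\cdots))=0$. We now quote Towers' result \cite{Frattini Novikov}, recalled in the introduction, that every left nilpotent Novikov algebra is nilpotent. This gives that $P_N$ is nilpotent, and in particular right nilpotent.

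\emph{Main obstacle.} The one real difficulty is matching conventions. The paper's ``left nilpotent'' means $A_R^n=A\circ A_R^{n-1}=0$, i.e.\ right-normed products vanish, so each $Q_x$ is nilpotent. We must check that this is exactly the notion for which \cite{Frattini Novikov} proves nilpotency for (left) Novikov algebras with identities \eqref{Novikov1}--\eqref{Novikov2}. If Towers' convention is the mirror one, the fallback is to argue directly, in the style of Lemma~\ref{chief ideal lem}, that every chief factor $B/C$ satisfies $B\circ P\subseteq C$. For this one would use \eqref{Novikov1}, which makes the right multiplications $Q'_x$ commute, together with \eqref{Novikov2}, to show that $Q'_x$ acts nilpotently on $B/C$.

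\emph{Conclusion.} Once $P_A$ is nilpotent and $P_N$ is right nilpotent, Theorem~\ref{engelthm2} (which uses finite-dimensionality) shows that $P$ is right nilpotent. Since $P$ is also left nilpotent, Theorem~\ref{weak nil-nil} shows that $P$ is nilpotent. Alternatively, once both $P_A$ and $P_N$ are known to be nilpotent, Theorem~\ref{engelthm1} gives nilpotency of $P$ immediately.
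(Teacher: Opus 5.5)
Your argument is correct, but it takes a longer route than the paper and relies on an external theorem where none is needed.

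The paper's proof rests on one trivial observation you missed: left nilpotency implies solvability. By induction,
$P^{(n)}=P^{(n-1)}\cdot P^{(n-1)}+P^{(n-1)}\circ P^{(n-1)}\subseteq P\cdot P_R^{n}+P\circ P_R^{n}=P_R^{n+1}$.
Theorem~\ref{nil-soltrans} then makes $P$ right nilpotent, and Theorem~\ref{weak nil-nil} finishes.

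You instead split $P$ into $P_A$ and $P_N$, import Towers' theorem \cite{Frattini Novikov} for $P_N$, and reassemble with Theorem~\ref{engelthm1} (or Theorems~\ref{engelthm2} and~\ref{weak nil-nil}). Each step is valid. Your convention worry also resolves in your favour. The mirror reading of Towers' theorem is false: the paper's final example $a\circ b=b$ is right nilpotent but not nilpotent. So the only true version is the one you need.

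What your route costs is self-containment. Taking $\cdot=0$, this corollary is exactly Towers' theorem in finite dimension, so you have outsourced its core content. The paper's argument instead re-derives that case from Theorem~\ref{nil-soltrans}.

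The solvability observation also makes your vague fallback about $Q'_x$ acting nilpotently on chief factors unnecessary. Applied to $P_N$, it gives right nilpotency via Lemma~\ref{nil-solNovikov}. Applied to $P$ itself, it gives right nilpotency directly.
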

\begin{proof}
It is straightforward by Theorems~\ref{nil-soltrans} and ~\ref{weak nil-nil}.
\end{proof}

\section{It\^{o}'s theorem for transposed Novikov-Poisson algebras} In this section, we will prove that It\^{o}'s theorem remains valid for transposed Novikov-Poisson algebras. Recall that a dialgebra \((P,\cdot,[\cdot,\cdot])\) is called \textbf{abelian} if for all \(a, b \in P\), we have \(a\cdot b=a\circ b = 0\). Based on the general definitions of metabelian groups and Lie algebras, the definition of metatrivial Poisson algebras was given in ~\cite{itopoisson,metadedi}. Therefore, by analogy, we give the corresponding definition for transposed Novikov-Poisson algebras.
\begin{defi}
Let \((P, \cdot, \circ)\) be a transposed Novikov-Poisson algebra. Then $P$ is called \textbf{metatrivial} (or \textbf{strongly metabelian}) if \(P\) is an extension of an abelian transposed Novikov-Poisson algebra by another abelian one, that is, if there exist two abelian transposed Novikov-Poison algebras \( V_1 \), \( V_2 \) and an exact sequence of algebra morphisms
\begin{eqnarray}\label{metaexact}
0 \longrightarrow V_1 \xrightarrow{i} P \xrightarrow{\pi} V_2 \longrightarrow 0 .
\end{eqnarray}
\end{defi}

\begin{pro}\label{metapro}
Let $(P,\cdot ,\circ)$ be a transposed Novikov-Poisson algebra. Then the following statements are equivalent:
\begin{enumerate}
    \item\label{meta1} \( P \) is metatrivial;
    \item \label{meta2}\( P \) is a solvable algebra with \( (c_1*_1c_2)*_2(c_3*_3c_4)= 0 \), where \( c_1,c_2,c_3,c_4 \in P \) and $*_1,*_2,*_3\in \{\cdot,\circ\}$;
    \item \label{meta3}The derived algebra \( P^2 \) is an abelian subalgebra of \( P \).
\end{enumerate}
\end{pro}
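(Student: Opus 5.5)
We prove the cycle \eqref{meta1}$\Rightarrow$\eqref{meta3}$\Rightarrow$\eqref{meta2}$\Rightarrow$\eqref{meta1}. Throughout, $P^2=P\cdot P+P\circ P$, which is an ideal by Proposition~\ref{idealpro2} and coincides with $P^{(1)}$.

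For \eqref{meta1}$\Rightarrow$\eqref{meta3}, take an exact sequence \eqref{metaexact}. Since $\pi$ is a morphism and $V_2$ is abelian, we have $\pi(x\cdot y)=\pi(x)\cdot\pi(y)=0$ and $\pi(x\circ y)=0$. Hence $P^2\subseteq\ker\pi=i(V_1)$. Because $i$ is an injective morphism and $V_1$ is abelian, $i(V_1)$ is an abelian subalgebra of $P$. Its subspace $P^2$ is closed under both products, which vanish on it, so $P^2$ is an abelian subalgebra.

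For \eqref{meta3}$\Rightarrow$\eqref{meta2}, first note that any element $c_1*_1c_2$ lies in $P^2$. Since $P^2$ is abelian, $(c_1*_1c_2)*_2(c_3*_3c_4)=0$. Solvability follows because $P^{(2)}=P^2\cdot P^2+P^2\circ P^2=0$.

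For \eqref{meta2}$\Rightarrow$\eqref{meta1}, the identity says precisely that $a*b=0$ for all generators $a,b$ of $P^2$ of the form $c_1*_1c_2$. Extending bilinearly, $P^2\cdot P^2=P^2\circ P^2=0$. Set $V_1=P^2$ with the zero products; by the above it is an abelian transposed Novikov-Poisson algebra and a subalgebra of $P$, so the inclusion $i$ is a morphism. Set $V_2=P/P^2$, which is a transposed Novikov-Poisson algebra because $P^2$ is an ideal. The induced products on $V_2$ vanish since $P\cdot P+P\circ P\subseteq P^2$, so $V_2$ is abelian. The canonical projection $\pi$ is a morphism, and the sequence $0\to P^2\to P\to P/P^2\to 0$ is exact. (In this direction the solvability hypothesis is not even needed beyond the identity itself.)

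There is no genuine obstacle here. The only point to check carefully is that the displayed identity on products of generators extends bilinearly to all of $P^2\cdot P^2$ and $P^2\circ P^2$. This holds because $P^2$ is spanned by the elements $c_1*_1c_2$.
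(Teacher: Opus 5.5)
Your proof is correct and is essentially the paper's argument. The paper shows (a)$\Rightarrow$(b) via $c_1*c_2\in\ker\pi=V_1$, calls (b)$\Leftrightarrow$(c) obvious, and proves (c)$\Rightarrow$(a) with the same exact sequence $0\to P^2\to P\to P/P^2\to 0$; you arrange these same steps as the cycle (a)$\Rightarrow$(c)$\Rightarrow$(b)$\Rightarrow$(a) and spell out the bilinear-extension and abelian-quotient checks that the paper leaves implicit.
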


\begin{proof}
$\eqref{meta2}\Leftrightarrow \eqref{meta3}$ is obvious.\\
$\eqref{meta1}\Rightarrow \eqref{meta2}$: Assume that ~\eqref{metaexact} is the corresponding exact sequence of $P$. Then we have $c_1*c_2 \in \operatorname{Ker}(\pi) = V_1$ since $\pi(c_1*c_2)=\pi(c_1)*\pi(c_2)\in V_2*V_2=(0)$, where $c_1,c_2\in P $ and $*\in \{\cdot,\circ \}$. Hence, we obtain \( (c_1*_1c_2)*_2(c_3*_3c_4)\in V_1\cdot V_1+V_1\circ V_1= (0) \), where \( c_1,c_2,c_3,c_4 \in P \) and $*_1,*_2,*_3\in \{\cdot,\circ\}$.\\
$\eqref{meta3}\Rightarrow \eqref{meta1}$: If \( P^1 \) is an abelian subalgebra of \( P \), then we have an exact sequence:
\begin{eqnarray*}
0 \longrightarrow V_1 = P^2 \xrightarrow{i} P \xrightarrow{\pi} (P/P^2)=V_2 \longrightarrow 0,
\end{eqnarray*}
where \( i \) is the inclusion map and \( \pi \) the canonical projection.
\end{proof}

\begin{thm} [Ito's theorem for transposed Novikov-Poisson algebras]\label{ito}
Let \( A \) and \( B \) be two abelian subalgebras of a transposed Novikov-Poisson algebra \( (P,\cdot,\circ) \). If \( P = A + B \), then \( P \) is metatrivial.
\end{thm}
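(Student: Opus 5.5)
By Proposition~\ref{metapro}, it suffices to prove that $P^2$ is an abelian subalgebra, i.e. that $u*v=0$ for all $u,v\in P^2$ and $*\in\{\cdot,\circ\}$. The first step is to describe $P^2$ concretely. Since $A\cdot A=A\circ A=B\cdot B=B\circ B=0$ and $\cdot$ is commutative, bilinearity gives
\begin{eqnarray*}
P^2=P\cdot P+P\circ P=A\cdot B+A\circ B+B\circ A .
\end{eqnarray*}
So I only need to check that every product of two generators of the forms $a\cdot b$, $a\circ b$, $b\circ a$ (with $a,a'\in A$, $b,b'\in B$) vanishes. The roles of $A$ and $B$ are symmetric, which roughly halves the number of cases. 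The guiding strategy is to rewrite each product with the identities \eqref{transNP1}, \eqref{transNP2}, \eqref{Tid1}, \eqref{Novikov1}, \eqref{Novikov2}, associativity and commutativity, until two elements of the same abelian subalgebra are multiplied directly.

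Several cases close quickly by this ``gathering'' trick. By associativity and commutativity, $(a\cdot b)\cdot(a'\cdot b')=(a\cdot a')\cdot(b\cdot b')=0$. By \eqref{transNP1},
\begin{eqnarray*}
(a\cdot b)\circ(a'\cdot b')=(a\cdot(a'\cdot b'))\circ b=((a\cdot a')\cdot b')\circ b=0 .
\end{eqnarray*}
By associativity and \eqref{Tid1},
\begin{eqnarray*}
(a\circ b)\cdot(a'\cdot b')=((a\circ b)\cdot a')\cdot b'=((a\circ a')\cdot b)\cdot b'=0 ,
\end{eqnarray*}
and the same argument handles $(b\circ a)\cdot(a'\cdot b')$. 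For products whose first factor is $a\cdot b$, \eqref{transNP1} gives $(a\cdot b)\circ w=(a\cdot w)\circ b$. I then reduce $a\cdot w$ with \eqref{transNP2} or \eqref{Tid1}, expecting a factor $a\cdot a'$ or $b\cdot b'$ to appear. Similarly, $(a\circ b)\circ(a'\cdot b')$ and $(b\circ a)\circ(a'\cdot b')$ should reduce via \eqref{Novikov1} and \eqref{transNP2}, rewriting $x\circ(a'\cdot b')=2a'\cdot(x\circ b')-(a'\cdot x)\circ b'$.

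For the purely Novikov products $(x\circ y)\circ(z\circ w)$ with $x,y,z,w$ drawn from $A\cup B$, I would invoke It\^{o}'s theorem for Novikov algebras \cite{Nilpotent Novikov}. Since $P_N=A+B$ with $A,B$ abelian Novikov subalgebras, $P_N\circ P_N$ is $\circ$-abelian, and this disposes of every $\circ$-product of two elements of $A\circ B+B\circ A$.

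The main obstacle is the mixed associative products of two Novikov generators, such as $(a\circ b)\cdot(a'\circ b')$ and $(a\circ b)\cdot(b'\circ a')$. Applying \eqref{Tid1} gives $(a\circ b)\cdot(a'\circ b')=(a\circ(a'\circ b'))\cdot b$, which does not immediately collapse. My first attempt is to use commutativity to write the same element as $(a'\circ(a\circ b))\cdot b'$, and \eqref{Novikov2} to express $a\circ(a'\circ b')$ through $a'\circ(a\circ b')$ and terms $(a\circ a')\circ b'=0$. I would then multiply by $b$ and apply \eqref{transNP2} in the form
\begin{eqnarray*}
2b\cdot(x\circ y)=(b\cdot x)\circ y+x\circ(b\cdot y) .
\end{eqnarray*}
This moves $b$ inside, where it meets $b'$, or produces terms already shown to vanish above. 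Comparing the resulting expressions should yield a relation of the form $2X=0$ or $3X=0$ for $X=(a\circ b)\cdot(a'\circ b')$, which gives $X=0$ using $\operatorname{char}{\bf k}\neq 2$ (and possibly needing an extra care if a factor $3$ appears). Once all generator products vanish, $P^2$ is abelian and Proposition~\ref{metapro} gives metatriviality.
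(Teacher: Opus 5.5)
Your reduction to products of the generators of $P^2=A\cdot B+A\circ B+B\circ A$, your three quick cases, and your appeal to It\^{o}'s theorem for Novikov algebras for $(P\circ P)\circ(P\circ P)$ all agree with the paper. The gap is the step you yourself flag as the main obstacle. For $(a\circ b)\cdot(a'\circ b')$ and $(a\circ b)\cdot(b'\circ a')$ you only describe a manipulation through \eqref{Novikov2} and \eqref{transNP2} that ``should yield'' $2X=0$ or $3X=0$, and you never carry it out. That is not a proof. A relation $3X=0$ would not even suffice, since the paper only assumes $\operatorname{char}{\bf k}\neq 2$. Your cases with first factor $a\cdot b$, and with $x\circ(a'\cdot b')$, are likewise only announced, although they do go through.

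The missing observation is that $P\cdot(P\circ P)=0$, which makes every remaining case immediate. You already have half of it inside your third case: \eqref{Tid1} gives $a'\cdot(a\circ b)=(a\circ a')\cdot b=0$. For the other half, use \eqref{transNP2} with $z=b'$, then \eqref{transNP1}:
\[
2\,b'\cdot(a\circ b)=(b'\cdot a)\circ b+a\circ(b'\cdot b)=(b'\cdot a)\circ b=(b'\cdot b)\circ a=0 .
\]
Hence $B\cdot(A\circ B)=0$, and symmetrically $P\cdot(B\circ A)=0$.

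With this in hand the remaining cases close as follows.
\begin{enumerate}
\item $(P\circ P)\cdot P^2=0$, which disposes of your main obstacle at once.
\item For $w\in P\circ P$, \eqref{transNP1} gives $(x\cdot y)\circ w=(x\cdot w)\circ y=0$.
\item For $w\in P\circ P$, \eqref{transNP2} gives $w\circ(z\cdot y)=2z\cdot(w\circ y)-(z\cdot w)\circ y=0$.
\end{enumerate}
Together with your first two cases and the Novikov It\^{o} theorem, this completes the proof. It is exactly how the paper proceeds, via the identities $a\cdot(a'\circ b')=a\cdot(b'\circ a')=b\cdot(a'\circ b')=b\cdot(b'\circ a')=0$.
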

\begin{proof}
By Proposition~\ref{metapro}, we only need to show that \( (c_1*_1c_2)*_2(c_3*_3c_4)= 0 \), where \( c_1,c_2,c_3,c_4 \in P \) and $*_1,*_2,*_3\in \{\cdot,\circ\}$.\par
For all $c_1,c_2,c_3,c_4\in P$, we have  $(c_1\cdot c_2)\cdot (c_3\cdot c_4)=0$ directly.
Let $c_i=a_i+b_i$, where $i=1,2,3,4$, $a_i\in A$ and $b_i\in B$. Then we obtain
\begin{eqnarray*}
(c_1\cdot c_2)\circ (c_3\cdot c_4)&=&(a_1\cdot b_2)\circ (a_3\cdot b_4)+(a_1\cdot b_2)\circ (b_3\cdot a_4)\\
&+&(b_1\cdot a_2)\circ (a_3\cdot b_4)+(b_1\cdot a_2)\circ (b_3\cdot a_4)\\
&=&(a_1\cdot (a_3\cdot b_4))\circ b_2+(a_1\cdot (b_3\cdot a_4))\circ b_2\\
&+&(b_1\cdot (a_3\cdot b_4))\circ a_2+(b_1\cdot  (b_3\cdot a_4))\circ a_2\\
&=&0.
\end{eqnarray*}
Similarly, we obtain $(c_1\circ c_2)\cdot c_3\cdot c_4=(c_1\cdot c_2)\cdot (c_3\circ c_4)=0$.\par
By Eqs.~\eqref{transNP1} and~\eqref{transNP2}, we obtain
\begin{eqnarray*}
a\circ (a'\cdot b')= 2b'\cdot(a\circ a')-(b'\cdot a)\circ a'=2 b'\cdot(a\circ a')-(a\cdot a')\circ b'=0
\end{eqnarray*}
and
\begin{eqnarray*}
a\cdot (a'\circ b')=\frac{1}{2}(a\cdot a')\circ b+\frac{1}{2}a'\circ (a\cdot b')=0+b'\cdot (a'\circ a)-\frac{1}{2}(b'\cdot a')\circ a=0,
\end{eqnarray*}
for all $a,a'\in A,b, b'\in B$. Similarly, we have $a\cdot (b'\circ a')=b\cdot (a'\circ b')=b\cdot (b'\circ a')=0$. Thus,  we can obtain $(c_1\circ c_2)\cdot (c_3\circ c_4)=(c_1\cdot c_2)\circ (c_3\circ c_4)=(c_1\circ c_2)\circ (c_3\cdot c_4)=0$ directly.
And $(c_1\circ c_2)\circ (c_3\circ c_4)=0$ follows from It\^{o}'s theorem for Novikov algebras~\cite{Nilpotent Novikov}.
\end{proof}

According to Theorem~\ref{ito}, if $A$ and $B$ are abelian, the whole transposed Novikov-Poisson algebra $P=A+B$ is 2-step solvable, and by Theorem~\ref{nil-soltrans}, it is right nilpotent. However, it may not be nilpotent. Finally, we present an example to show this.

\begin{ex}
Let $P$ be a 2-dimensional vector space over {\bf k} with a basis $\{a,b\}$. By~\cite{Nilpotent Novikov}, $(P, \circ)$ is a Novikov algebra with the non-zero products given by
\begin{eqnarray*}
&&a\circ b = b.
\end{eqnarray*}
Now we define the trivial associative products. It is easy to see that $P$ is a transposed Novikov-Poisson algebra and ${\bf k}a$ and ${\bf k}b$ are abelian subalgebras, and $(P,\cdot, \circ)$ is not nilpotent.
\end{ex}

\noindent {\bf Acknowledgments.}
This research is supported by
Natural Science Foundation of China (No. 12171129) and  Zhejiang
Provincial Natural Science Foundation of China (No. Z25A010006).

\smallskip

\noindent
{\bf Declaration of interests. } The authors have no conflicts of interest to disclose.

\smallskip

\noindent
{\bf Data availability. } No new data were created or analyzed in this study.

\end{document}